\documentclass[a4paper,12pt]{amsart}
\usepackage[T1]{fontenc}
\usepackage[utf8]{inputenc}
\usepackage{bussproofs}
\usepackage{graphicx}
\usepackage{amsthm}
\usepackage{amsmath}
\usepackage{amsfonts}
\usepackage{amssymb}
\usepackage{amsbsy}
\usepackage{fancyhdr}
\usepackage{mathrsfs}
\usepackage{lmodern}
\usepackage{graphicx}
\usepackage{xcolor}
\usepackage[colorlinks,allcolors=blue]{hyperref}
\usepackage[nameinlink,capitalize]{cleveref}
\usepackage{tikz-cd}
\usepackage{mathtools}
\usepackage{parskip}
\usepackage{enumitem}
\usepackage[english]{babel}

\newcommand{\F}{\mathbb{F}}

\newcommand{\C}{\mathcal{C}}

\newcommand{\X}{\mathcal{X}}

\newcommand{\CL}{\mathcal{C}_\mathcal{L}}

\def\cD{\mathcal{D}}

\DeclareMathOperator{\Supp}{Supp}

\DeclareMathOperator{\ev}{ev}

\everymath{\displaystyle}

\makeatletter
\renewcommand*\env@matrix[1][*\c@MaxMatrixCols c]{%
	\hskip -\arraycolsep
	\let\@ifnextchar\new@ifnextchar
	\array{#1}}
\makeatother

\newtheorem{definition}{Definition}
\newtheorem{example}{Example}
\newtheorem{lemma}{Lemma}
\newtheorem{theorem}{Theorem}
\newtheorem{remark}{Remark}
\newtheorem{proposition}{Proposition}
\newtheorem{corollary}{Corollary}
\newtheorem{conjecture}{Expectation}

\title[Explicit LCP of MDS Codes, LCD Codes on Hyperelliptic Curves via Mumford Pairs]{Explicit LCP of MDS Codes and LCD Codes on Hyperelliptic Curves via Mumford Representation}

\author{Adler Marques, Yuri da Silva, and Saeed Tafazolian
}
\date{}

\address{Universidade Federal do Rio de Janeiro --	Instituto de Matemática,	Cidade Universitária,	CEP 21941-909, Rio de Janeiro, Brazil}
\email{adler@im.ufrj.br}

\address{Departamento de Matem\'{a}tica - Instituto de Matem\'{a}tica, Estat\'{i}stica e Computação Cient\'{i}fica
	(IMECC) - Universidade Estadual de Campinas (UNICAMP), Rua S\'{e}rgio Buarque de Holanda, 651, Cidade Universit\'{a}ria,  Zeferino Vaz, Campinas, SP 13083-859, Brazil}

\email{y225979@dac.unicamp.br}
\email{saeed@unicamp.br}

\begin{document}

	\begin{abstract}
		We study algebraic geometry codes on hyperelliptic curves of genus $g \geq 2$ with complementarity properties. Our first contribution is a characterization of non-special divisors of degree $g$ and $g-1$ via the polynomial degrees of their reduced Mumford representation, reducing a classical hard geometric problem to a single-degree test on univariate polynomials. Using this, we construct Linear Complementary Pairs (LCP) of codes via polynomial arithmetic on the Jacobian and provide a criterion in terms of Mumford degrees for the resulting codes to be Maximum Distance Separable (MDS). Under a $2$-torsion condition in the Jacobian, equivalently a divisibility condition on the Mumford polynomials, we obtain explicit multipliers that turn these pairs into Linear Complementary Dual (LCD) codes. Finally, we apply this framework to the maximal hyperelliptic curve
		$\mathcal{X} \colon y^2 = x^q + x$ over $\mathbb{F}_{q^2}$ and give
		explicit examples of MDS LCD codes with parameters
		$[2q,q,q+1]_{q^2}$ for $q = 4, 5, 7$, verified computationally; we
		conjecture, with heuristic support, that such codes exist for all
		$q \geq 4$.
	\end{abstract}
	
	\maketitle
	\section{Introduction}
	\label{sec:introduction}
	
	Linear codes with complementary properties have become fundamental objects in both coding theory and cryptographic applications. Two particularly important classes are Linear Complementary Dual (LCD) codes and Linear Complementary Pairs (LCP) of codes. LCD codes were introduced by Massey~\cite{Massey1992}, who showed that they provide optimum linear coding solutions for the two-user binary adder channel and for nearest-neighbour decoding. More recently, Carlet and Guilley~\cite{CarletGuilley2016} demonstrated that LCD codes are instrumental in securing cryptographic implementations against side-channel attacks and non-invasive fault attacks. The generalization to LCP of codes~\cite{Carlet2018}, where a pair $(\mathcal{C}_1, \mathcal{C}_2)$ of linear codes of length $n$ over a finite field $\mathbb{F}_q$ satisfies $\mathcal{C}_1 \oplus \mathcal{C}_2 = \mathbb{F}_q^n$, is a key component of Direct Sum Masking (DSM). This countermeasure simultaneously protects against Side-Channel Attacks (SCA) and Fault-Injection Attacks (FIA) on cryptographic hardware~\cite{Bringer14,Carlet2018}. The classical LCD condition is recovered as the special case $\mathcal{C}_2 = \mathcal{C}_1^\perp$. For an LCP $(\mathcal{C}_1, \mathcal{C}_2)$, the minimum distance $d(\mathcal{C}_1)$ determines the resistance to FIA, while $d(\mathcal{C}_2^\perp)$ determines the resistance to SCA; the \emph{security parameter} $d(\mathcal{C}_1, \mathcal{C}_2) := \min\{d(\mathcal{C}_1), d(\mathcal{C}_2^\perp)\}$ jointly quantifies the overall security~\cite{Carlet2018}.
	
	The explicit construction of LCD codes and LCP of codes is an active area of research. The problem becomes more intricate when additional structural properties, such as the Maximum Distance Separable (MDS) condition, are required. Achieving the Singleton bound $d = n - k + 1$ is desirable as it maximizes both error-correcting capability and cryptographic security for a given length and dimension. Two primary approaches have emerged in the literature to construct these optimal codes: methods based on classical evaluation codes on rational curves, and methods based on general Algebraic Geometry (AG) codes.
	
	Following the first approach, several works have focused on explicitly constructing LCD MDS codes and LCP of MDS codes from Generalized Reed-Solomon (GRS) codes and their variants, such as Twisted Reed-Solomon (TRS) codes~\cite{Beelen2018, Jin2017, ChenLiu2018, ShiYueYang2018, QianZhang2018, BeelenTRS2017, LiuLiu2021}. Alongside these explicit constructions, foundational results by Carlet et al.~\cite{CarletEquiv2018, CarletMDS2018} established that for $q > 3$, every linear code is equivalent to an LCD code. However, these explicit algebraic constructions operate exclusively within the genus-zero setting. Because the underlying curves are projective lines, the properties of these evaluation codes rely on classical Vandermonde determinants and standard divisor arithmetic, rather than on advanced geometric tools.
	
	To extend these results, it is natural to consider curves of higher genus ($g > 0$) through the framework of AG codes. Introduced by Goppa~\cite{Goppa1981}, these codes are constructed by evaluating functions on arbitrary smooth projective curves over finite fields. As demonstrated by Tsfasman, Vlăduț, and Zink~\cite{TVZ1982}, sequences of AG codes can asymptotically exceed the Gilbert-Varshamov bound. AG codes also provide a natural framework for constructing long codes with complementary properties. Nevertheless, as framed by Bhowmick, Dalai, and Mesnager~\cite{Bhowmick2024}, the explicit construction of LCPs and LCD codes in this higher-genus setting requires the explicit description of \emph{non-special} divisors of degree $g-1$, which remains a challenging problem. The existence of such non-special divisors is guaranteed in general by Ballet and Le Brigand~\cite{Ballet2006} for $q \geq 4$; see \cite{BK2025} for a recent survey on the topic.
	
	Recent advances have primarily relied on Kummer extensions to construct these non-special divisors of degree $g-1$ explicitly \cite{Moreno2024, Castellanos2025, Mendoza2026, 4author, MST2026}. These methods have enabled the construction of LCPs and LCD codes across various function fields, including asymptotic constructions. For instance, in \cite{MQ2025}, the explicit description of non-special divisors within a tower of function fields defined by Artin-Schreier extensions enabled the construction of sequences of LCD codes and LCP of codes that attain the Tsfasman-Vlăduț-Zink (TVZ) bound. Despite this progress, current results often depend on specific structural properties of given extensions or non-constructive existence theorems. A general framework for explicitly constructing non-special divisors for $g>1$ remains an open problem.
	
	In this work, we address this problem for hyperelliptic curves of genus $g \geq 2$ by exploiting the Mumford representation. Every divisor class on a hyperelliptic curve admits a unique reduced representative encoded by a pair of univariate polynomials $(u, v) \in \mathbb{F}_q[x] \times \mathbb{F}_q[x]$, where $u$ is monic and $\deg v < \deg u \leq g$ \cite{Mum84}. We provide a complete algebraic characterization of non-special divisors in terms of this representation, translating a geometric property into explicit polynomial conditions. Specifically, we prove that a divisor $A$ of degree $g-1$ is non-special if and only if $\deg u_A = g$, where $(u_A, v_A)$ is the reduced Mumford representative of the class $[A-(g-1)P_\infty]$. Similarly, a divisor $A$ of degree $g$ is non-special if and only if $\deg u_A \geq g-1$.
	
	The paper is organized as follows. In Section \ref{sec:characterization}, we establish the complete algebraic characterization of non-special divisors of degree $g-1$ and $g$ using the Mumford representation. In Section \ref{sec:code_constructions}, we present an explicit construction of LCP of hyperelliptic codes over $\F_q$ via the Chinese Remainder Theorem and provide a criterion for the codes to be MDS. We also introduce a 2-torsion condition on the divisor class, expressed as $u_A \mid (2v_A + h)$, which allows for the explicit computation of residues of a differential form, yielding a multiplier-based construction of LCD codes from hyperelliptic curves. Finally, we apply our framework to the maximal hyperelliptic curve $y^2 = x^q + x$ over $\mathbb{F}_{q^2}$. By exploiting the maximality of the curve and a Jacobian counting argument, we confirm the existence of Mumford data satisfying both MDS and LCD conditions, providing concrete examples of MDS LCD codes with parameters $[2q, q, q+1]_{q^2}$.

	\section{Preliminaries}
	\label{sec:preliminaries}
	
	In this section we fix notation and recall the background needed in
	the rest of the paper: algebraic function fields and Riemann-Roch
	spaces, algebraic geometry (AG) codes, linear complementary pairs
	(LCP) and linear complementary dual (LCD) codes, and the Mumford
	representation of divisor classes on hyperelliptic curves.
	
	\subsection{Algebraic Function Fields}
	\label{ssec:ff}
	
	Let $\mathbb{F}_q$ be a finite field with $q$ elements and let
	$\overline{\mathbb{F}}_q$ denote its algebraic closure. Let
	$\mathcal{X}$ be a smooth, projective, absolutely irreducible
	algebraic curve defined over $\mathbb{F}_q$, of genus $g$, with
	function field $\mathbb{F}_q(\mathcal{X})$. We denote by
	$\mathrm{Div}(\mathcal{X})$ the divisor group of $\mathcal{X}$, that
	is, the free abelian group generated by the closed points (places)
	of $\mathcal{X}$. For a place $P$ of $\mathcal{X}$, $v_P$ denotes the
	associated discrete valuation and $\deg(P)$ its degree over
	$\mathbb{F}_q$.
	
	A divisor $D=\sum_{P}m_PP\in\mathrm{Div}(\mathcal{X})$, where
	$m_P=0$ for almost all $P$, has support
	\[
	\mathrm{Supp}(D)=\{P\in\mathcal{X}\mid m_P\neq0\}
	\]
	and degree $\deg(D)=\sum_{P}m_P\deg(P)$. The group
	$\mathrm{Div}(\mathcal{X})$ carries the partial order
	\[
	D=\sum_P m_PP\ \le\ G=\sum_P n_PP
	\quad\Longleftrightarrow\quad
	m_P\le n_P \text{ for all } P.
	\]
	For a nonzero $z\in\mathbb{F}_q(\mathcal{X})$ we write
	$\mathrm{div}(z)=\mathrm{div}_0(z)-\mathrm{div}_\infty(z)
	=\sum_P v_P(z)P$
	for its principal, zero and pole divisors, respectively. Two
	divisors $D_1,D_2\in\mathrm{Div}(\mathcal X)$ are linearly
	equivalent, written $D_1\sim D_2$, if $D_1-D_2=\mathrm{div}(z)$ for
	some $z\in\mathbb{F}_q(\mathcal{X})^*$.
	
	Given $G\in\mathrm{Div}(\mathcal{X})$, the \emph{Riemann-Roch
		space} associated with $G$ is the $\mathbb F_q$-vector space
	\[
	\mathcal{L}(G)=\{z\in\mathbb{F}_q(\mathcal{X})\mid
	v_P(z)\ge -v_P(G)\ \text{for all } P\}\cup\{0\},
	\]
	of dimension $\ell(G):=\dim_{\mathbb F_q}\mathcal{L}(G)$; this
	dimension depends only on the linear equivalence class of $G$.
	
	Let $\Omega_{\mathcal X}=\{f\,dx\mid f\in\mathbb F_q(\mathcal X)\}$
	be the space of rational differential forms on $\mathcal X$, for a
	fixed separating element $x$. For a local parameter $t$ at a place
	$P$ and $\omega=f\,dt\in\Omega_{\mathcal X}$, set
	$v_P(\omega):=v_P(f)$, and define the divisor of $\omega$ by
	$\mathrm{div}(\omega)=\sum_P v_P(\omega)P$. Any two nonzero
	differentials have linearly equivalent divisors; their common class
	$K_{\mathcal X}$ is the \emph{canonical class} of $\mathcal X$, and
	$\deg(K_{\mathcal X})=2g-2$. The \emph{Riemann-Roch Theorem} states
	that for every $G\in\mathrm{Div}(\mathcal X)$,
	\begin{equation}
		\label{eq:RR}
		\ell(G)=\deg(G)+1-g+\ell(K_{\mathcal X}-G).
	\end{equation}
	The quantity $i(G):=\ell(K_{\mathcal X}-G)$ is the \emph{index of
		speciality} of $G$.
	
	\begin{definition}
		\label{def:nonspecial}
		A divisor $G\in\mathrm{Div}(\mathcal X)$ is \emph{non-special} if
		$i(G)=0$, equivalently $\ell(G)=\deg(G)+1-g$; otherwise $G$ is
		\emph{special}.
	\end{definition}
	
	By Riemann-Roch Theorem, every divisor of degree larger than $2g-2$ is
	automatically non-special and every non-special divisor has degree
	at least $g-1$. Non-special divisors of degree exactly $g-1$ or $g$
	are called \emph{non-special divisors of small degree}; these are exactly the divisors with $\ell(G)=0$ or
	$\ell(G)=1$, respectively, the smallest values compatible with
	non-speciality. These are precisely the divisors that play the
	central role in the LCP constructions of Section~\ref{sec:characterization}.
	
	We will also need the following divisors. For
	$A,B\in\mathrm{Div}(\mathcal X)$, define
	\[
	\gcd(A,B):=\sum_{P}\min\{v_P(A),v_P(B)\}P,
	\qquad
	\mathrm{lmd}(A,B):=\sum_P\max\{v_P(A),v_P(B)\}P.
	\]
	These two divisors satisfy the fundamental identities
	\begin{equation}
		\label{eq:gcdlmd}
		\mathcal{L}(A)\cap\mathcal{L}(B)=\mathcal{L}(\gcd(A,B))
		\quad \mbox{and} \quad
		\gcd(A,B)+\mathrm{lmd}(A,B)=A+B.
	\end{equation}
	
	\subsection{Algebraic Geometry Codes}
	\label{ssec:agcodes}
	
	Let $\mathcal X$ be as above and let $P_1,\dots,P_n\in\mathcal
	X(\mathbb F_q)$ be pairwise distinct $\mathbb F_q$-rational points.
	Set $D:=P_1+\cdots+P_n$, and let $G\in\mathrm{Div}(\mathcal X)$
	satisfy $\mathrm{Supp}(G)\cap\mathrm{Supp}(D)=\emptyset$.
	
	\begin{definition}
		\label{def:agcode}
		The
		\emph{algebraic geometry (AG) code}  $\CL(D,G)$ associated with the divisors $D$ and $G$ is defined as the image of the evaluation map 
		\begin{align*}
			\mathrm{ev}_D \colon  \mathcal L(G)& \longrightarrow \mathbb F_q^n \\
			z & \longmapsto  (z(P_1),\dots,z(P_n));
		\end{align*}
		that is,
		\[
		\CL(D,G)=\{(z(P_1),\dots,z(P_n))\mid z\in\mathcal L(G)\}\subseteq
		\mathbb F_q^n.
		\]
	\end{definition}
	
	\begin{proposition}[{\cite[Thm.~2.2.2]{Stichtenoth2009}}]
		\label{prop:agparams}
		The code $\CL(D,G)$ has parameters $[n,k,d]$, where
		\[
		k=\ell(G)-\ell(G-D) \quad \mbox{and} \quad d\ge n-\deg(G).
		\]
		In particular, if $2g-2<\deg(G)<n$, then
		$k=\ell(G)=\deg(G)+1-g$.
	\end{proposition}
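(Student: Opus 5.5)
The statement to prove is Proposition~\ref{prop:agparams}, the standard parameter estimate for $\CL(D,G)$. I would argue directly from the evaluation map $\mathrm{ev}_D\colon\mathcal L(G)\to\mathbb F_q^n$. It is $\mathbb F_q$-linear, and it is well defined because $\mathrm{Supp}(G)\cap\mathrm{Supp}(D)=\emptyset$. Indeed, for $z\in\mathcal L(G)$ we have $v_{P_i}(z)\ge -v_{P_i}(G)=0$, so $z$ is regular at each $P_i$ and $z(P_i)\in\mathbb F_q$, since $P_i$ is rational.

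\textbf{Dimension.} The first step is to compute the kernel of $\mathrm{ev}_D$. An element $z\in\mathcal L(G)$ lies in the kernel exactly when $v_{P_i}(z)\ge 1$ for every $i$. Because the $P_i$ are outside the support of $G$, this is the same as $z\in\mathcal L(G-D)$. Hence $\ker(\mathrm{ev}_D)=\mathcal L(G-D)$, and rank--nullity gives $k=\ell(G)-\ell(G-D)$.

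\textbf{Minimum distance.} Take a nonzero codeword $\mathrm{ev}_D(z)$ of weight $w$. Then $z$ vanishes at $n-w$ of the points, say $P_{i_1},\dots,P_{i_{n-w}}$, so
\[
z\in\mathcal L\Bigl(G-\textstyle\sum_{j=1}^{n-w}P_{i_j}\Bigr)\setminus\{0\}.
\]
A nonzero Riemann--Roch space forces nonnegative degree, because $\mathrm{div}(z)+G-\sum_j P_{i_j}\ge 0$ and principal divisors have degree $0$. Therefore $\deg G-(n-w)\ge 0$, that is, $w\ge n-\deg G$, which gives $d\ge n-\deg(G)$.

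\textbf{The special case.} Now assume $2g-2<\deg G<n$. Since $\deg(G-D)<0$, we get $\ell(G-D)=0$, so $k=\ell(G)$. Since $\deg G>2g-2=\deg K_{\mathcal X}$, the divisor $K_{\mathcal X}-G$ has negative degree and $\ell(K_{\mathcal X}-G)=0$. The Riemann--Roch Theorem \eqref{eq:RR} then gives $\ell(G)=\deg G+1-g$.

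There is no real obstacle. The only point needing care is the identification of the kernel with $\mathcal L(G-D)$, which uses the disjointness of supports so that the valuation conditions at the $P_i$ add cleanly.
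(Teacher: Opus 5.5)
Your proof is correct and is essentially the standard argument of \cite[Thm.~2.2.2 and Cor.~2.2.3]{Stichtenoth2009}, which the paper cites rather than reproving. That argument identifies $\ker(\mathrm{ev}_D)=\mathcal L(G-D)$ and bounds the weight of a codeword using a nonzero element of $\mathcal L\bigl(G-\sum_j P_{i_j}\bigr)$; for the special case it uses $\deg(G-D)<0$ together with Riemann--Roch and $\deg(K_{\mathcal X}-G)<0$. The only point left implicit is that the bound $d\ge n-\deg(G)$ is vacuous when $\CL(D,G)=0$, a case Stichtenoth also sets aside.
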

	
	The dual of an AG code is again an AG code.
	
	\begin{proposition}[{\cite[Thm.~2.2.2, Prop.~8.1.2]{Stichtenoth2009}}]
		\label{prop:dualcode}
		Let $t\in\mathbb F_q(\mathcal X)$ be such that $v_{P_i}(t)=1$ for
		$1\le i\le n$, and set $\eta=dt/t$. Then $v_{P_i}(\eta)=-1$ with
		residue $1$ at each $P_i$ with $1\le i\le n$ and
		\[
		\CL(D,G)^{\perp}=\CL(D,H),
		\]
		where $H:=D-G+\mathrm{div}(\eta)$.
	\end{proposition}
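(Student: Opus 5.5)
The statement is the classical duality theorem for AG codes, so the plan follows Stichtenoth's argument: prove that $\CL(D,H)$ is orthogonal to $\CL(D,G)$ via the residue theorem, then show the two dimensions add up to $n$.

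\textbf{Step 1: local behaviour of $\eta$.} At each $P_i$ we have $v_{P_i}(t)=1$, so $t$ is a local parameter there. Hence
\[
\eta = dt/t = t^{-1}\,dt
\]
has a simple pole at $P_i$, with residue equal to the coefficient of $t^{-1}$, namely $1$. This gives the first claim, and it uses $\mathrm{char}$-independent facts only.

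\textbf{Step 2: $\CL(D,H)$ is a well-defined code.} We need $\mathrm{Supp}(H)\cap\mathrm{Supp}(D)=\emptyset$. This holds because $v_{P_i}(H)=1-0+(-1)=0$, using $v_{P_i}(G)=0$.

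\textbf{Step 3: orthogonality.} Take $f\in\mathcal L(G)$ and $h\in\mathcal L(H)$, and set $\omega=fh\,\eta$. Then
\[
\mathrm{div}(\omega)\ge -G-H+\mathrm{div}(\eta) = -D .
\]
So $\omega$ is regular away from $\mathrm{Supp}(D)$ and has at worst simple poles at the $P_i$. Its residue at $P_i$ is $f(P_i)h(P_i)\cdot\mathrm{res}_{P_i}(\eta)=f(P_i)h(P_i)$. The residue theorem, $\sum_P \mathrm{res}_P(\omega)=0$, then gives
\[
\sum_{i=1}^n f(P_i)h(P_i)=0,
\]
that is, $\CL(D,H)\subseteq \CL(D,G)^\perp$.

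\textbf{Step 4: dimension count.} This is the main bookkeeping step. By Proposition~\ref{prop:agparams},
\[
\dim \CL(D,G)=\ell(G)-\ell(G-D),\qquad \dim\CL(D,H)=\ell(H)-\ell(H-D).
\]
Write $W=\mathrm{div}(\eta)$, a canonical divisor. Then
\[
H = W-(G-D), \qquad H-D = W-G .
\]
Riemann--Roch \eqref{eq:RR}, applied to $G$ and to $G-D$, gives
\[
\ell(W-G)=\ell(G)-\deg G-1+g,\qquad \ell(W-G+D)=\ell(G-D)-\deg G+n-1+g .
\]
Subtracting,
\[
\ell(H)-\ell(H-D)=n-\ell(G)+\ell(G-D)=n-\dim\CL(D,G).
\]
Together with the inclusion from Step 3, this forces $\CL(D,G)^\perp=\CL(D,H)$.

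The only genuine obstacles are the existence of such a $t$ (weak approximation, which is assumed here) and the residue theorem. Both are standard facts we would cite from Stichtenoth.
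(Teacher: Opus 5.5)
Your proof is correct. The paper gives no argument of its own and only cites Stichtenoth, whose proof uses the same two ingredients: the residue theorem for the inclusion $\CL(D,H)\subseteq\CL(D,G)^\perp$, then Riemann--Roch to match dimensions. The difference is organizational: Stichtenoth passes through the differential code $\mathcal{C}_\Omega(D,G)$ and the isomorphism $\mathcal L(H)\to\Omega(G-D)$, $z\mapsto z\eta$, whereas you fold these into one direct computation.
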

	
	Finally, we recall the notion of code equivalence, which will be
	used to pass from an LCP to an LCD code by an explicit coordinate
	rescaling.
	
	\begin{definition}
		\label{def:equivcodes}
		\rm{Two codes $\C_1,\C_2\subseteq\mathbb F_q^n$ are  \emph{(linearly) equivalent} if
			there exists $\boldsymbol{a}=(a_1,\dots,a_n) \in (\mathbb F_q^*)^n$ such that
			$\C_2= \boldsymbol{a} \cdot \C_1:=\{(a_1c_1,\dots,a_nc_n)\mid(c_1,\dots,c_n)\in
			\C_1\}$. Equivalent codes have the same length, dimension and
			minimum distance. If $G\sim H$ as divisors with
			$\mathrm{Supp}(G)\cap\mathrm{Supp}(D)=\mathrm{Supp}(H)\cap
			\mathrm{Supp}(D)=\emptyset$, say $G=H+\mathrm{div}(a)$ with
			$v_{P_i}(a)=0$ for all $i$, then $\CL(D,H)=\boldsymbol{a}\cdot \CL(D,G)$ for
			$\boldsymbol{a}=(a(P_1),\dots,a(P_n))$.}
	\end{definition}
	
	\subsection{Linear Complementary Pairs and LCD Codes}
	\label{ssec:lcp}
	
	\begin{definition}
		\label{def:lcp}
		Let $\C_1$ and $\C_2$ be linear codes of length $n$ over
		$\mathbb{F}_q$. The pair $(\C_1,\C_2)$ is called a
		\emph{linear complementary pair} (LCP) if
		\[
		\C_1+\C_2=\mathbb{F}_q^n
		\qquad\text{and}\qquad
		\C_1\cap \C_2={0}.
		\]
		Equivalently,
		\[
		\C_1\oplus \C_2=\mathbb{F}_q^n.
		\]
		A linear code $\C$ is called a \emph{linear complementary dual}
		(LCD) code if
		\[
		\C\cap \C^\perp={0},
		\]
		or equivalently,
		\[
		\C\oplus\C^\perp=\mathbb{F}_q^n.
		\]
		Hence LCD codes arise as a particular case of LCPs by taking
		$\C_2=\C_1^\perp$.
	\end{definition}
	
	LCD codes were introduced by Massey \cite{Massey1992} and
	have since attracted considerable attention due to their
	applications in coding theory and cryptography. More
	generally, LCPs of codes play a central role in Direct Sum
	Masking (DSM), a countermeasure against side-channel attacks
	(SCA) and fault-injection attacks (FIA)
	\cite{Bringer14,Carlet2018}. For an LCP $(\C_1,\C_2)$, the
	minimum distance $d(\C_1)$ measures the resistance against
	FIA, whereas the minimum distance $d(\C_2^\perp)$ measures
	the resistance against SCA. The quantity
	\[
	d(\C_1,\, \C_2) := \min\{d(\C_1),\, d(\C_2^\perp)\}
	\]
	is called the \emph{security parameter} of the pair
	\cite{Carlet2018}. In the particular case of an LCD code
	$\C$, the security parameter reduces to $d(\C)$.

	Let $P_1,\dots,P_n$ and $D$ be as in
	Section~\ref{ssec:agcodes}, and let $G,H\in\mathrm{Div}(\mathcal
	X)$ satisfy $\mathrm{Supp}(G)\cap\mathrm{Supp}(D)=
	\mathrm{Supp}(H)\cap\mathrm{Supp}(D)=\emptyset$. In \cite{Bhowmick2024}, Bhowmick, Dalai, and Mesnager provided sufficient conditions on the divisors $G$, $H$ and $D$ to ensure that a pair $(\CL(D,G),\CL(D,H))$ is an LCPs of AG codes, which relies on the explicit understanding of non-special divisors of degree $g-1$.
	\begin{lemma}[{\cite[Theorem.~3.5]{Bhowmick2024}}]
		\label{lem:lcpcriterion}
		Let $\mathcal X/\mathbb F_q$ have genus $g\ge 1$. Suppose that
		$2g-2 < \deg(G),\, \deg(H)<n$ and that the divisors $G$ and $H$ satisfy
		\begin{enumerate}[label=(\roman*)]
			\item  $\ell(G)+\ell(H)=n$,
			\item $\deg(\gcd(G,H))=g-1$, and
			\item both $\gcd(G,H)$ and $\mathrm{lmd}(G,H)-D$ are
			non-special.
		\end{enumerate}
		Then $(\CL(D,G), \, \CL(D,H))$ is an LCP of AG codes.
	\end{lemma}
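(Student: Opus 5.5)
Set $U:=\gcd(G,H)$ and $W:=\mathrm{lmd}(G,H)$, and show separately that $\CL(D,G)\cap\CL(D,H)=\{0\}$ and that the dimensions add up to $n$. Together these give $\CL(D,G)\oplus\CL(D,H)=\mathbb F_q^n$.

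\textbf{Dimensions.} Since $2g-2<\deg(G),\deg(H)<n$, Proposition~\ref{prop:agparams} gives $\dim\CL(D,G)=\ell(G)$ and $\dim\CL(D,H)=\ell(H)$. By hypothesis (i) these sum to $n$. It therefore suffices to prove that the intersection is trivial.

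\textbf{Trivial intersection.} Take $c\in\CL(D,G)\cap\CL(D,H)$, so $c=\ev_D(f)=\ev_D(h)$ with $f\in\mathcal L(G)$ and $h\in\mathcal L(H)$. Then $f-h$ vanishes at every $P_i$. Because $P_i\notin\Supp(G)\cup\Supp(H)$, the function $f-h$ lies in $\mathcal L(W)$, and vanishing at the $P_i$ puts it in $\mathcal L(W-D)$.

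Now compute $\deg(W-D)$. By \eqref{eq:gcdlmd},
\[
\deg W=\deg G+\deg H-(g-1).
\]
Riemann–Roch with $\ell(G)=\deg G+1-g$ and $\ell(H)=\deg H+1-g$, together with (i), gives $\deg G+\deg H=n+2g-2$. Hence $\deg(W-D)=g-1$. Since $W-D$ is non-special by (iii), we get $\ell(W-D)=\deg(W-D)+1-g=0$. Therefore $f=h$, and this common function lies in $\mathcal L(G)\cap\mathcal L(H)=\mathcal L(U)$ by \eqref{eq:gcdlmd}. By (ii) and (iii), $U$ is non-special of degree $g-1$, so $\ell(U)=0$. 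Thus $f=h=0$ and $c=0$.

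\textbf{Main point.} The key step is the degree bookkeeping showing $\deg(W-D)=g-1$: non-speciality alone yields $\ell=0$ only in degree $g-1$. That degree count uses hypothesis (i) together with the range of $\deg(G)$ and $\deg(H)$. The rest is routine.
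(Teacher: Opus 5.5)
Your proof is correct and takes essentially the paper's approach: the dimension count via Proposition~\ref{prop:agparams} and (i), then killing a common codeword first through $\mathcal L(\mathrm{lmd}(G,H)-D)=\{0\}$ and then through $\mathcal L(G)\cap\mathcal L(H)=\mathcal L(\gcd(G,H))=\{0\}$, is exactly what the paper runs (in its special case) in the proof of Theorem~\ref{thm:LCP-hyperelliptic-Goppa}, since the lemma itself is only quoted from \cite{Bhowmick2024} without proof. Your derivation of $\deg(\mathrm{lmd}(G,H)-D)=g-1$ from (i), (ii) and Riemann--Roch is the one step the paper does not need there, because it gets $\mathrm{lmd}(G,H)-\mathcal D\sim A$ with $\ell(A)=0$ directly.
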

	
	By Lemma~\ref{lem:lcpcriterion}, the construction of LCPs of AG
	codes hinges entirely on producing, for suitable $G,H$, an explicit
	non-special divisor $\gcd(G,H)$ of degree $g-1$ together with a
	non-special divisor $\mathrm{lmd}(G,H)-D$. The existence of
	non-special divisors of degree $g$ and $g-1$ is guaranteed in
	general by Ballet and Le Brigand \cite{Ballet2006} for $q\ge 3$ (resp.\
	$q\ge 4$), but an \emph{explicit} description of such divisors
	amounts, geometrically, to deciding whether a given class in
	$\mathrm{Pic}^{g-1}(\mathcal X)$ avoids the theta divisor of the
	Jacobian $J(\mathcal X)$, a problem with no algorithmic solution
	for a general curve. The remainder of the paper resolves this
	problem completely for hyperelliptic curves by translating it, via
	the Mumford representation recalled below, into a condition on
	polynomial degrees.
	
	\subsection{Hyperelliptic Curves and the Mumford Representation}
	\label{ssec:hyperelliptic}
	
	Let $g\ge 2$ and let $\mathcal X$ be a hyperelliptic curve of genus
	$g$ over $\mathbb F_q$. Regardless of $\mathrm{char}(\mathbb F_q)$,
	$\mathcal X$ admits an affine (imaginary quadratic) model
	\begin{equation}
		\label{eq:hyperelliptic}
		\mathcal X:\quad y^2+h(x)y=f(x),
	\end{equation}
	where $h,f\in\mathbb F_q[x]$, $f$ is monic of degree $2g+1$,
	$\deg(h)\le g$, and there is no point
	$(x,y)\in\overline{\mathbb F}_q\times\overline{\mathbb F}_q$
	satisfying simultaneously
	\[
	y^2+h(x)y-f(x)=0,\qquad h(x)=0,\qquad 2y+h(x)=0
	\]
	(non-singularity). The curve $\mathcal X$ has a single
	$\mathbb F_q$-rational point at infinity $P_\infty$, and
	$\mathbb F_q(\mathcal X)=\mathbb F_q(x,y)$.
	
	The \emph{hyperelliptic involution} is the $\mathbb F_q$-automorphism
	\begin{align*}
		\iota \colon \mathcal X & \longrightarrow \mathcal X \\
		(x,y) &\longmapsto (x,-y-h(x)) \\ 
		P_\infty &\longmapsto P_\infty.
	\end{align*}
	
	A point $P$ with $\iota(P)=P$ is called a \emph{ramification point}
	of the degree-two cover $x:\mathcal X\to\mathbb P^1$.
	
	\begin{definition}
		\label{def:semireduced}
		A divisor $D\in\mathrm{Div}(\mathcal X)$ is \emph{semi-reduced} if it can be written as
		\[
		D=\sum_{i=1}^r m_iP_i + n P_\infty,
		\]
		where $n \in \mathbb{Z}$, $m_i\ge0$, the affine points $P_i=(x_i,y_i)\ne P_\infty$ are
		pairwise distinct with $P_i\ne\iota(P_j)$ for $i\ne j$, and $m_i=1$
		whenever $P_i$ is a ramification point. A semi-reduced divisor is
		\emph{reduced} if, in addition, $\sum_{i=1}^r m_i\le g$.
		
		In particular, if a semi-reduced divisor $D$ has degree $0$, then $n = -\sum_{i=1}^r m_i$, meaning it takes the form
		\[
		D=\sum_{i=1}^r m_iP_i-\Big(\sum_{i=1}^r m_i\Big)P_\infty.
		\]
	\end{definition}
	
	While Definition \ref{def:semireduced} applies to divisors of arbitrary degrees, our primary interest lies in the Jacobian of the curve, which consists of divisor classes of degree zero. By the Riemann-Roch Theorem, every class in  $\mathrm{Pic}^0(\mathcal X)\cong J(\mathcal X)$ has a unique representative of the form $D_{\mathrm{aff}}-dP_\infty$ with $D_{\mathrm{aff}}$ being an effective reduced divisor of degree $d\le g$; equivalently, every divisor of degree zero on $\mathcal X$ is linearly equivalent to a reduced divisor of this form. To manipulate such classes effectively, we focus on these degree $0$ semi-reduced divisors and encode them as pairs of univariate polynomials, following Mumford \cite{Mum84}.
	
	\begin{lemma}[Mumford Representation, {\cite{Mum84}}]
		\label{lem:mumford}
		Every semi-reduced divisor
		$D=\sum_{i=1}^r m_iP_i-(\sum_{i=1}^r m_i)P_\infty$ on $\mathcal X$
		is uniquely represented by a pair of polynomials
		$(u,v)\in\mathbb F_q[x]\times\mathbb F_q[x]$ such that:
		\begin{enumerate}
			\item[1)] $u$ is monic, with $u(x)=\prod_{i=1}^r(x-x_i)^{m_i}$;
			\item[2)] $u(x)\mid v(x)^2+h(x)v(x)-f(x)$;
			\item[3)] $\deg(v)<\deg(u)$, where $v$ interpolates the
			$y$-coordinates of $\mathrm{Supp}(D)$ with the appropriate
			multiplicities, i.e.\ $v(x_i)=y_i$.
		\end{enumerate}
		Moreover, $D$ is reduced if and only if $\deg(u)\le g$.
	\end{lemma}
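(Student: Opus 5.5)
We reduce the statement to the classical correspondence between semi-reduced divisors and ideals of the coordinate ring $R=\mathbb F_q[x,y]/(y^2+h(x)y-f(x))$. Let $D=\sum_{i=1}^r m_iP_i-(\sum m_i)P_\infty$ be semi-reduced, with affine part $D_{\mathrm{aff}}=\sum m_iP_i$. The goal is to show that the ideal $I_D=\{\phi\in R : \mathrm{div}(\phi)\ge D_{\mathrm{aff}}\ \text{on the affine part}\}$ is generated as an $R$-ideal by $u(x)$ and $y-v(x)$ for a unique pair $(u,v)$ with properties 1)--3). The argument proceeds point by point (Galois-stable over $\mathbb F_q$), and then assembles the local data by the Chinese Remainder Theorem in $\mathbb F_q[x]$.

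\textbf{Construction of $u$.} Take $u=\prod (x-x_i)^{m_i}$. Because $D$ is semi-reduced, no two points of the support lie over the same $x$-coordinate: $P_i\neq\iota(P_j)$ and the points are distinct. Consequently, for each $x$-value $x_i$ exactly one point $P_i$ occurs. Since $D$ is $\mathrm{Gal}$-invariant, $u\in\mathbb F_q[x]$.

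\textbf{Construction of $v$ locally.} At a non-ramified $P_i$, the function $x-x_i$ is a local parameter, and $y$ has a Taylor expansion in $x-x_i$. Truncating at order $m_i$ gives a polynomial $v_i$ with $\deg v_i<m_i$ and $v_{P_i}(y-v_i(x))\ge m_i$. Since $\iota(P_i)\ne P_i$, the conjugate function $y'=-y-h$ is a unit at $P_i$ relative to $y-v_i$ after the shift. Hence $v_{P_i}\big((y-v_i)(-y-h-v_i)\big)\ge m_i$, that is,
\[
(x-x_i)^{m_i}\mid v_i^2+hv_i-f .
\]
At a ramified $P_i$ we have $m_i=1$, and $v_i=y_i$ works because $x-x_i$ vanishes to order $2$ there. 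We then glue: the CRT in $\mathbb F_q[x]$ (applied over $\overline{\mathbb F}_q$ and descended by Galois invariance) gives a unique $v$ with $\deg v<\deg u$ and $v\equiv v_i \pmod{(x-x_i)^{m_i}}$. Then $u\mid v^2+hv-f$, and $v(x_i)=y_i$.

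\textbf{Uniqueness.} This is the step I expect to be the main obstacle. Suppose $(u,v')$ also satisfies 1)--3). Condition 1) forces $u$. Condition 2) says that $(x-x_i)^{m_i}$ divides $(v'-y)(v'+y+h)$ evaluated as a function whose valuation at $P_i$ is computed. Since $v'(x_i)=y_i$ and $P_i$ is non-ramified, the factor $v'+y+h$ is nonzero at $P_i$, because $y_i\ne -y_i-h(x_i)$. This forces $v_{P_i}(y-v')\ge m_i$, so $v'\equiv v_i \pmod{(x-x_i)^{m_i}}$. The degree bound then gives $v'=v$. The ramified case is immediate since $m_i=1$. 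Conversely, any pair $(u,v)$ as in 1)--3) defines the divisor $\gcd(\mathrm{div}(u),\mathrm{div}(y-v))$ on the affine part, which is semi-reduced. This establishes the bijection.

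\textbf{Reducedness.} The final assertion is tautological: $\deg u=\sum m_i$, so $D$ is reduced, i.e.\ $\sum m_i\le g$, if and only if $\deg u\le g$.
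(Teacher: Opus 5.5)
The paper gives no argument for this lemma; it is quoted from Mumford's Tata lectures and used as a black box. Your proof is correct, and it is a self-contained version of the standard argument behind that citation. At each non-ramified $P_i$ you lift $y$ modulo $(x-x_i)^{m_i}$. This is equivalent to Hensel-lifting the simple root $y_i$ of $Y^2+h(x_i)Y-f(x_i)$, which is simple because $2y_i+h(x_i)\neq 0$. You glue the local pieces by the Chinese Remainder Theorem over $\overline{\mathbb F}_q$ and descend to $\mathbb F_q$ because the local data is Frobenius-stable. Uniqueness comes from the factorization $v'^2+hv'-f=(v'-y)(v'+y+h)$: at a non-ramified $P_i$, the condition $v'(x_i)=y_i$ makes the second factor a unit. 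Compared with the bare reference, your argument works directly for the general model $y^2+h(x)y=f(x)$ in every characteristic, including the characteristic-$2$ curves used later in the paper. It also makes explicit why $(u,v)$ has coefficients in $\mathbb F_q$.

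There are a few loose ends, none of which affects the statement. First, the ideal-theoretic goal you announce at the outset, that $u$ and $y-v$ generate the ideal of $D_{\mathrm{aff}}$ in the coordinate ring, is never established; it is also not needed. Second, the closing converse is only asserted. It does hold, because $u\mid v^2+hv-f$ forces multiplicity $1$ at any ramified root, but it is not part of the lemma. Third, at a ramified $P_i$ the reason $v_i=y_i$ works is simply that $(x_i,y_i)$ lies on the curve, so $(x-x_i)\mid y_i^2+h y_i-f$. Your remark that $x-x_i$ vanishes to order $2$ there is beside the point.
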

	
	We write $D\leftrightarrow(u,v)$, or $D=D_{(u,v)}$, when $(u,v)$ is the Mumford representation of the semi-reduced divisor $D$, and call such a pair a \emph{Mumford pair} on $\mathcal X$; it is \emph{reduced} when $\deg(u)\le g$. By Lemma~\ref{lem:mumford}, every class in $J(\mathcal X)$ corresponds to a unique reduced Mumford pair, and the group law of $J(\mathcal X)$ can be carried out directly on Mumford pairs via Cantor's algorithm \cite{Cantor1987}, which implements the reduction step by repeatedly replacing conjugate pairs $P+\iota(P)$ in a semi-reduced divisor by $2P_\infty$, until the affine part has degree at most $g$. This polynomial encoding is what allows us, in Section~\ref{sec:code_constructions}, to replace the (generally intractable) verification of non-speciality by an explicit computation of $\deg(u)$.

	\begin{remark}
		\label{rem:maximal}
		\rm{A function field $\mathcal X/\mathbb F_{q^2}$ of genus $g$ is
			\emph{maximal} if it attains the Hasse-Weil upper bound
			$\#\mathcal X(\mathbb F_{q^2})=q^2+1+2gq$. For a maximal curve, the
			Frobenius endomorphism of $J(\mathcal X)$ acts as multiplication by
			$-q$, which forces
			$J(\mathcal X)(\mathbb F_{q^2})\cong(\mathbb Z/(q+1)\mathbb Z)^{2g}$
			as a group; in particular $\#J(\mathcal X)(\mathbb F_{q^2})=(q+1)^{2g}$.
			This fact is used in Section~\ref{sec:code_constructions} to obtain
			explicit families of MDS LCD codes on maximal hyperelliptic curves.}
	\end{remark}

	\section{Characterization of Non-Special Divisors via Mumford Representations}
	\label{sec:characterization}
	
	In this section, we present our main results concerning the algebraic characterization of non-special divisors of small degree on hyperelliptic curves. Geometrically, identifying whether a divisor $A$ of degree $g-1$ or $g$ is non-special (i.e., satisfies $\ell(A)=0$ or $\ell(A)=1$, respectively) is a deeply classical problem in algebraic geometry, as it corresponds to determining whether its linear equivalence class avoids the subvarieties of the Jacobian, such as the Theta divisor $\Theta$. 
	
	Using the bijection between divisor classes and reduced polynomial pairs established in Section~\ref{sec:preliminaries}, we demonstrate that this geometric property translates into a direct constraint on the degree of the Mumford representative. This approach bypasses direct dimension computations and serves as the basis for our subsequent code constructions.
	
	We begin by giving a necessary and sufficient criterion for a hyperelliptic divisor of degree $g-1$ to be non-special.
	
	\begin{proposition}
		\label{prop:l(A)=0_iff_degu=g}
		Let $A$ be a rational divisor on $\mathcal X$ of degree $g-1$. Let $(u_A,v_A)$
		be the reduced Mumford representative of the divisor class
		$[A-(g-1)P_\infty]\in J(\mathcal X)$. Then $\ell(A)=0$ if and only if
		$\deg u_A=g$.
	\end{proposition}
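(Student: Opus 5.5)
Since $\ell(A)$ depends only on the linear equivalence class of $A$, the plan is to replace $A$ by a convenient representative and then compare two descriptions of $\mathcal L$. Write $[A-(g-1)P_\infty]=[D_{(u_A,v_A)}]$ with $D_{(u_A,v_A)}=E-dP_\infty$, where $E$ is the effective reduced affine divisor attached to $(u_A,v_A)$ and $d=\deg u_A\le g$. Then $A\sim E+(g-1-d)P_\infty$, so
\[
\ell(A)=\ell\bigl(E+(g-1-d)P_\infty\bigr).
\]
I will prove the two implications separately.

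\emph{($\Leftarrow$)} Suppose $d=g$. Then $A\sim E-P_\infty$, and I must show $\mathcal L(E-P_\infty)=0$. A nonzero $z$ in this space would satisfy $z\in\mathcal L(E)$ and vanish at $P_\infty$. The key lemma is that for a reduced effective affine divisor $E$ of degree $\le g$ one has $\mathcal L(E)=\mathbb F_q$, i.e.\ $\ell(E)=1$.

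To prove the key lemma, I take $z\in\mathcal L(E)$. Its divisor of zeros has degree $\deg E'\le \deg E\le g$ and satisfies $\operatorname{div}(z)+E\ge0$, so $E\sim E'$ with $E'$ effective. If $z$ is nonconstant, then $E-\deg(E)P_\infty$ and $E'-\deg(E)P_\infty$ are distinct degree-zero divisors in the same class. I would apply the uniqueness of reduced representatives (stated before Lemma~\ref{lem:mumford}) to get a contradiction. The main subtlety is that $E'$ need not be reduced a priori. The fix: if $E'$ contains $P+\iota(P)$, replace that pair by $2P_\infty$; this lowers the affine degree and yields a reduced representative of strictly smaller affine degree. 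Since $E$ is itself reduced of the same class, this still contradicts uniqueness. If $E'$ is semi-reduced, it is already reduced since $\deg E'\le g$, and uniqueness forces $E'=E$, contradicting $z$ nonconstant.

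This shows $\ell(E)=1$, so the only candidates are the constants, and they do not vanish at $P_\infty$. Hence $\ell(A)=0$.

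\emph{($\Rightarrow$)} Suppose $d\le g-1$. Then $A\sim E+(g-1-d)P_\infty$, which is an effective divisor, so $1\in\mathcal L\bigl(E+(g-1-d)P_\infty\bigr)$ and $\ell(A)\ge1$. The contrapositive gives the forward implication, and this direction is immediate.

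The main obstacle is therefore the key lemma $\ell(E)=1$ for reduced $E$, i.e.\ turning the uniqueness statement for reduced representatives into a dimension statement. An alternative route is to use the explicit basis of $\mathcal L(mP_\infty)$, namely $x^i$ and $x^jy$. One observes that any $z\in\mathcal L(E)$ multiplied by $u_A$ lies in $\mathcal L(\deg(E)\cdot 2P_\infty)$-type spaces, writes $u_Az=a(x)+b(x)y$, and checks by a degree count that $b=0$ and $u_A\mid a$. I would use whichever of these two routes turns out to be cleaner.
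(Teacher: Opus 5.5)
Your proof is correct and is essentially the paper's argument. The direction $\deg u_A\le g-1\Rightarrow \ell(A)\ge 1$ is identical. For the other direction, both you and the paper turn a nonzero section into an effective divisor in the relevant class, strip off $P_\infty$ and conjugate pairs $P+\iota(P)$, and contradict uniqueness of the reduced representative; you only phrase it contrapositively, through the rigidity statement $\ell(E)=1$ for reduced $E$.

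One point to make explicit: $E+\operatorname{div}(z)$ may a priori contain $P_\infty$ (if $z$ vanishes there). This lowers the affine degree exactly as removing a conjugate pair does, so your contradiction goes through unchanged.
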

	
	\begin{proof}
		Let $[D-dP_\infty]$ be the unique reduced representative of the class
		$[A-(g-1)P_\infty]$, where $D$ is an effective affine reduced divisor and
		$d=\deg D=\deg u_A\leq g$. Since
		$A-(g-1)P_\infty\sim D-dP_\infty$, we have
		$A\sim D+(g-1-d)P_\infty$.
		If $d\leq g-1$, then $D+(g-1-d)P_\infty$ is effective, so $A$ is linearly
		equivalent to an effective divisor and $\ell(A)\geq 1$. 
		
		Conversely, suppose $\ell(A)\geq 1$. Then there exists an effective divisor $E$ of degree $g-1$ such that $A \sim E$, which implies:
		\[
		A - (g-1)P_\infty \sim E - (g-1)P_\infty.
		\]
		We now consider the effective divisor $E$ to find its reduced representative:
		\begin{itemize}
			\item If $E$ is already reduced and does not contain $P_\infty$ in its support, then $E - (g-1)P_\infty$ is already the unique reduced Mumford representative. Thus, $D = E$ and $\deg u_A = \deg E = g-1$.
			\item If $E$ is not reduced or contains $P_\infty$, it must contain either $P_\infty$ or a pair of conjugate points $P + \iota(P)$. Using the hyperelliptic relation $P + \iota(P) \sim 2P_\infty$, we can repeatedly replace any conjugate pair with $2P_\infty$ and subtract any copies of $P_\infty$ from the affine part. Each such reduction step strictly decreases the degree of the affine part by at least $1$.
		\end{itemize}
		Since the reduction process can only decrease or maintain the degree of the effective affine part, and we began with $\deg E = g-1$, the final unique reduced divisor $D$ must satisfy:
		\[
		\deg u_A = \deg D \leq g-1.
		\]
		Therefore, $\ell(A)\geq 1$ if and only if $\deg u_A\leq g-1$. Since every reduced Mumford representative satisfies $\deg u_A\leq g$ by definition, it follows by contraposition that $\ell(A)=0$ if and only if $\deg u_A=g$.
		
	\end{proof}
	
	\begin{corollary}
		\label{cor:non-special-degree-g-minus-one-mumford}
		Let $A$ be a rational divisor on $\mathcal X$ of degree $g-1$, and let $(u_A,v_A)$ be the reduced Mumford representative of the class $[A-(g-1)P_\infty]\in J(\mathcal X)$. Then $A$ is non-special if and only if $\deg u_A=g$.
	\end{corollary}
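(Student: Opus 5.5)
The corollary follows at once from Proposition~\ref{prop:l(A)=0_iff_degu=g} once we check that, for a divisor of degree exactly $g-1$, non-speciality is the same as $\ell(A)=0$. So I would first establish that equivalence and then invoke the proposition.

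By Definition~\ref{def:nonspecial}, $A$ is non-special precisely when $\ell(A)=\deg(A)+1-g$. For $\deg A=g-1$ the right-hand side is $0$. Concretely, I would apply the Riemann--Roch Theorem~\eqref{eq:RR} to $A$, which gives
\[
\ell(A)=(g-1)+1-g+\ell(K_{\mathcal X}-A)=\ell(K_{\mathcal X}-A)=i(A).
\]
Hence $i(A)=0$ if and only if $\ell(A)=0$, i.e.\ $A$ is non-special if and only if $\ell(A)=0$.

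Combining this with Proposition~\ref{prop:l(A)=0_iff_degu=g}, which states that $\ell(A)=0$ if and only if $\deg u_A=g$ for the reduced Mumford representative $(u_A,v_A)$ of $[A-(g-1)P_\infty]$, yields the corollary. The only point requiring care is that $\ell$ and $i$ depend only on the linear equivalence class. That is already noted in the preliminaries, so no step should present a real obstacle; the whole content lies in the proposition.
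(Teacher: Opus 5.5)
Your proof is correct and is the same as the paper's: Riemann--Roch gives $\ell(A)=\ell(K_{\mathcal X}-A)$ when $\deg A=g-1$, so non-speciality is equivalent to $\ell(A)=0$, and Proposition~\ref{prop:l(A)=0_iff_degu=g} finishes the argument. Your caveat about invariance under linear equivalence is harmless but not needed, since the proposition is applied to $A$ itself.
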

	
	\begin{proof}
		Since $\deg A=g-1$, Riemann-Roch gives $\ell(A)-\ell(K-A)=0$. Hence $A$ is non-special, that is, $\ell(K-A)=0$, if and only if $\ell(A)=0$. The result follows from Proposition~\ref{prop:l(A)=0_iff_degu=g}.
	\end{proof}
	
	We shall also use the analogous criterion for divisors of degree $g$.
	
	\begin{theorem}
		\label{thm:non-special-degree-g-mumford}
		Let $A$ be a rational divisor on $\mathcal X$ of degree $g$. Let $(u_A,v_A)$ be
		the reduced Mumford representative of the divisor class $[A-gP_\infty]$.
		Then $A$ is non-special if and only if $\deg u_A\in\{g-1,g\}$.
		Equivalently, $A$ is non-special if and only if $\deg u_A\geq g-1$.
	\end{theorem}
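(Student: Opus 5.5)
Since $\deg A=g$, Riemann--Roch gives $\ell(A)=1+\ell(K_{\mathcal X}-A)$. Hence $A$ is non-special exactly when $\ell(A)=1$, and special exactly when $\ell(A)\ge 2$. The plan is therefore to show that $\ell(A)\ge 2$ holds if and only if $\deg u_A\le g-2$. I will use two standard facts on hyperelliptic curves. First, $\mathcal L(mP_\infty)$ is spanned by $1,x,\dots,x^{\lfloor m/2\rfloor}$ for $m\le 2g$, since $y$ has pole order $2g+1$; in particular $\ell(2P_\infty)=2$. Second, every effective divisor of degree at most $g$ with $\ell\ge 2$ contains a fiber $P+\iota(P)$ or $2P_\infty$ up to linear equivalence; this is the usual fact that the $g^1_2$ is the unique special pencil in low degree.

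For the direction ``$\deg u_A\le g-2\Rightarrow$ special'', write the reduced representative as $D-dP_\infty$ with $d=\deg u_A$. As in the proof of Proposition~\ref{prop:l(A)=0_iff_degu=g}, we get $A\sim D+(g-d)P_\infty$ with $g-d\ge 2$. Then $\mathcal L(A)\cong\mathcal L(D+(g-d)P_\infty)\supseteq\mathcal L(2P_\infty)\ni 1,x$, so $\ell(A)\ge 2$ and $A$ is special.

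For the converse, suppose $\ell(A)\ge 2$. Pick a nonconstant $z\in\mathcal L(D+(g-d)P_\infty)$. I would argue directly in the Mumford framework. Consider $\mathcal L(D+(g-d)P_\infty)$ inside $\mathcal L(gP_\infty+D)$. A function with poles only in $D$ and at $P_\infty$ can be written as $(a(x)+b(x)y)/u_D(x)^k$. Reducedness of $D$, meaning no conjugate pairs and ramification points with multiplicity $1$, then forces the pole along $D$ to be cleared. Concretely, I expect to show that any such $z$ lies in $\mathcal L((g-d)P_\infty)$, and that $\mathcal L(D+mP_\infty)=\mathcal L(mP_\infty)$ whenever $D$ is reduced and $d+m\le g$. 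Granting this, $\ell(A)=\ell((g-d)P_\infty)=1+\lfloor (g-d)/2\rfloor$, and this is at least $2$ iff $g-d\ge 2$, i.e.\ $d\le g-2$. Combined with the first direction, this proves the theorem. The second formulation, $\deg u_A\ge g-1$, is immediate because $\deg u_A\le g$ always.

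The main obstacle is the lemma $\mathcal L(D+mP_\infty)=\mathcal L(mP_\infty)$ for reduced $D$ with $\deg D+m\le g$. I would prove it by contradiction. Suppose $z$ has a genuine pole at some point of $D$. Since $\deg\operatorname{div}_\infty(z)\le g<2g+1$, the function $z$ cannot involve $y$ in an essential way. Then the norm $z\cdot\iota^*(z)\in\mathbb F_q(x)$ would have poles at both $P$ and $\iota(P)$ over the same $x$-value, while $D$ contains only $P$. Comparing pole divisors of $z$ and $\iota^*z$, which must agree when $z\in\mathbb F_q(x)$, then gives a contradiction with reducedness. The remaining care is at ramification points and in checking that the degree bound really excludes the $y$-terms.
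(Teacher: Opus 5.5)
Your reduction to ``$\ell(A)\ge 2$ iff $d\le g-2$'' is correct, and so is your first direction via $1,x\in\mathcal L(2P_\infty)\subseteq\mathcal L(D+(g-d)P_\infty)$. Your lemma $\mathcal L(D+mP_\infty)=\mathcal L(mP_\infty)$, for reduced $D$ with $\deg D+m\le g$, is also true; it is exactly the case of de Boer's formula that the paper quotes as a black box. The gap is in your proof of that lemma, at the step that carries all the content. You argue that $z$ ``cannot involve $y$'' because $\deg\operatorname{div}_\infty(z)\le g<2g+1$, but $2g+1$ is the pole order of $y$ at $P_\infty$, and $z$ may also have affine poles. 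For a reduced pair $(u,v)$ with $\deg u=g$ one has $\operatorname{div}(y-v)=D_{(u,v)}+D'-(2g+1)P_\infty$ with $D'\ge 0$. Hence $(y-v)/u\notin\mathbb F_q(x)$ lies in $\mathcal L(\iota(D_{(u,v)})+P_\infty)$ and has pole degree at most $g+1<2g+1$. Here $\iota(D_{(u,v)})$ is reduced and $d+m=g+1$, so your reasoning would equally ``prove'' the lemma in a case where it is false. A correct argument must use the bound $g$ sharply. The norm remark does not rescue this: without knowing $z\in\mathbb F_q(x)$, poles of $z$ at $P$ can be cancelled by zeros of $\iota^*z$, and once $z\in\mathbb F_q(x)$ the norm is superfluous.

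The missing ingredient is Castelnuovo's inequality \cite[Thm.~3.11.3]{Stichtenoth2009}. If $z\notin\mathbb F_q(x)$, then $\mathbb F_q(x,z)=\mathbb F_q(\mathcal X)$, since $[\mathbb F_q(\mathcal X):\mathbb F_q(x)]=2$ is prime. The inequality then gives $g\le(2-1)(\deg\operatorname{div}_\infty(z)-1)$, that is, $\deg\operatorname{div}_\infty(z)\ge g+1$. So every $z\in\mathcal L(D+mP_\infty)$ with $d+m\le g$ lies in $\mathbb F_q(x)$. Its affine pole divisor is then $\iota$-invariant, with even multiplicity at ramification points. It is also bounded by the reduced $D$, which never contains $P$ together with $\iota(P)$ and has multiplicity $1$ at ramification points, so it must vanish. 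This also settles your ``remaining care at ramification points''.

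With that inserted, your route is complete and more self-contained than the paper's, which gets both directions at once from the quoted formula for $\ell(D+sP_\infty)$. Your ``second standard fact'' could replace the lemma, but only in its precise form. An effective $E$ with $\deg E\le g$ and $\ell(E)=r+1\ge 2$ satisfies $|E|=r\,g^1_2+F$, so $E$ itself contains a fiber of $x$. The ``up to linear equivalence'' version you stated says nothing about $D+(g-d)P_\infty$ itself.
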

	
	\begin{proof}
		Let $[D-dP_\infty]$ be the unique reduced representative of the class
		$[A-gP_\infty]$, where $D$ is an effective affine reduced divisor and
		$d=\deg D=\deg u_A\leq g$. Since $A-gP_\infty\sim D-dP_\infty$, we have
		$A\sim D+(g-d)P_\infty$. Let $s=g-d$. We use the following formula for dimension of Riemann-Roch spaces associated to semi-reduced divisors on hyperelliptic curves (see \cite[Lemma 2.2.5]{MBoerThesis}):
		if $D$ is effective affine semi-reduced of degree $d$, then
		\[
		\ell(D+sP_\infty)=
		\begin{cases}
			\left\lfloor s/2\right\rfloor+1, & \text{if } 2d+s\leq 2g-2,\\
			d+s+1-g, & \text{if } 2d+s>2g-2.
		\end{cases}
		\]
		In the present situation $s=g-d$, so $2d+s=g+d$. Hence the first case occurs
		exactly when $d\leq g-2$.
		
		If $d\leq g-2$, then $s\geq 2$, and the formula gives
		$\ell(A)=\ell(D+sP_\infty)\geq 2$. Since $\deg A=g$, Riemann-Roch gives
		$\ell(A)=1+\ell(K-A)$, so $\ell(K-A)>0$ and $A$ is special. Conversely, if
		$d\geq g-1$, then $d\in\{g-1,g\}$ and the second case applies. Since $d+s=g$,
		we get $\ell(A)=d+s+1-g=1$. Again using $\ell(A)=1+\ell(K-A)$, we obtain
		$\ell(K-A)=0$, and therefore $A$ is non-special.
	\end{proof}
	\begin{example}
		\rm{We now apply the algebraic criteria to explicitly construct non-special divisors of degree $g$ and $g-1$ over imaginary hyperelliptic curves. Throughout this example, let $P_\infty$ denote the unique ramified point at infinity. Recall that an effective affine divisor $D$ is reduced if its support avoids $P_\infty$ and contains no conjugate pairs $P + \iota(P)$.
			
			\textbf{Non-special divisors of degree $g$:}
			By Theorem \ref{thm:non-special-degree-g-mumford}, a divisor $A$ of degree $g$ is non-special if and only if its reduced affine part has degree $g$ or $g-1$. This provides a straightforward construction method, as follows.
			\begin{itemize}
				\item Let $P_1, \dots, P_g$ be pairwise distinct affine rational places such that $P_i \neq \iota(P_j)$ for all $i,j$. Then $A = \sum_{i=1}^g P_i$ is a non-special divisor of degree $g$.
				\item If we instead consider $g-1$ rational places satisfying the same condition, the divisor $A = P_\infty + \sum_{i=1}^{g-1} P_i$ is also non-special, as its reduced affine part has degree $g-1$.
				\item This naturally extends to places of higher degree. For instance, if $Q$ is an affine  place of degree $g$ containing no conjugate pairs in its support, then $A = Q$ is non-special. If $\deg Q = g-1$, we may take $A = Q + P_\infty$.
				\item More generally, any combination of places summing to degree $g$ or $g-1$ yields a non-special divisor (adjusting with $P_\infty$ in the latter case), provided the overall support remains strictly reduced.
			\end{itemize}
			
			\textbf{Non-special divisors of degree $g-1$:}
			For degree $g-1$, Corollary \ref{cor:non-special-degree-g-minus-one-mumford} requires the reduced affine part to have degree exactly $g$. A direct approach is to take a reduced effective affine divisor of degree $g$ and subtract $P_\infty$:
			\begin{itemize}
				\item Considering the previous rational places $P_1, \dots, P_g$, the divisor $A = \sum_{i=1}^g P_i - P_\infty$ is non-special of degree $g-1$.
				\item Similarly, taking the closed place $Q$ of degree $g$ defined above, we obtain the non-special divisor $A = Q - P_\infty$.
				\item Any combination of closed and rational places forming a reduced effective affine divisor $D$ of degree $g$ immediately provides a non-special divisor $A = D - P_\infty$.
		\end{itemize}}
	\end{example}

	\section{Construction of LCP of MDS Codes and LCD Codes}
	\label{sec:code_constructions}
	
	In this section, we use the algebraic characterizations of non-special divisors from Section~\ref{sec:characterization} to provide a systematic framework for constructing geometric codes. By replacing computationally expensive Riemann-Roch dimension computations with explicit polynomial degree conditions, we obtain efficient constructions with cryptographic applications. First, we detail a polynomial-based construction of Maximum Distance Separable (MDS) Linear Complementary Pairs (LCP) of codes. Second, we demonstrate that a 2-torsion condition on the Jacobian variety allows for the explicit computation of multiplier vectors to construct Linear Complementary Dual (LCD) codes.
	
	\subsection{LCP of MDS Codes via Mumford Representation}
	\label{subsec:lcp-mumford}
	
	We now describe the construction of our codes. The input data consists of two Mumford pairs: a reduced pair $(u_1,v_1)$ of degree $g$, and a semi-reduced pair $(u_2,v_2)$ of degree $t$. The corresponding divisors are subsequently defined in terms of these fixed polynomials."
	
	Let $t\geq g$. Choose a reduced Mumford pair $(u_1,v_1)$ with $u_1$ monic and $\deg u_1=g$, and choose a semi-reduced Mumford pair $(u_2,v_2)$ with $u_2$ monic and $\deg u_2=t$. Assume that $\gcd(u_1,u_2)=1$ and $\gcd(u_2,2v_2+h)=1$. Define $v_G$ and $v_H$ modulo $u_1 u_2$ by the Chinese Remainder conditions
	\[
	v_G\equiv v_1 \pmod{u_1}, \qquad v_G\equiv v_2 \pmod{u_2},
	\]
	and
	\[
	v_H\equiv v_1 \pmod{u_1}, \qquad v_H\equiv -v_2-h \pmod{u_2}.
	\]
	Based on these Mumford pairs, we define the following divisors $A:=D_{(u_1,v_1)} - P_\infty$, $B:=D_{(u_2,v_2)}$, $G:=D_{(u_1 u_2,v_G)} - P_\infty$ and $H:=D_{(u_1 u_2,v_H)}-P_\infty$. 
	
	From a divisor-theoretic perspective, the following proposition illustrates the consequences of the polynomial conditions on the associated Mumford pairs.
	
	\begin{proposition}
		\label{prop:divisorial-consequences-mumford-data}
		With the notation above, the divisor $A=D_{(u_1,v_1)}-P_\infty$ is non-special of degree $g-1$ and $\gcd(G,H)=A$. Moreover, if $\mathcal D=\sum_{i=1}^t(P_i+\iota(P_i))$ is a sum of $t$ conjugate pairs, then $\operatorname{lmd}(G,H)-\mathcal D\sim A$.
	\end{proposition}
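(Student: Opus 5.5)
Three things need to be shown: non-speciality of $A$, the identity $\gcd(G,H)=A$, and $\operatorname{lmd}(G,H)-\mathcal D\sim A$. I would take them in that order, working throughout with the affine parts of the semi-reduced divisors attached to the Mumford pairs.

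\textbf{Non-speciality of $A$.} The pair $(u_1,v_1)$ is reduced with $\deg u_1=g$, so $D_{(u_1,v_1)}$ is an effective reduced affine divisor of degree $g$. Hence $A=D_{(u_1,v_1)}-P_\infty$ has degree $g-1$, and
\[
A-(g-1)P_\infty=D_{(u_1,v_1)}-gP_\infty
\]
is already the reduced representative of its class, with Mumford pair $(u_1,v_1)$. Since $\deg u_1=g$, Corollary~\ref{cor:non-special-degree-g-minus-one-mumford} shows that $A$ is non-special.

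\textbf{The affine parts of $G$ and $H$.} I would first decompose the two Chinese Remainder pairs. Because $\gcd(u_1,u_2)=1$, the affine part of $D_{(u_1u_2,v_G)}$ is
\[
D_{(u_1,v_1)}+D_{(u_2,v_2)},
\]
since the points lying over roots of $u_1$ are read off from $v_G\bmod u_1=v_1$, and those over roots of $u_2$ from $v_G\bmod u_2=v_2$. Likewise, the affine part of $D_{(u_1u_2,v_H)}$ is
\[
D_{(u_1,v_1)}+D_{(u_2,-v_2-h)}=D_{(u_1,v_1)}+\iota(B).
\]
Here two checks are needed. First, $(u_2,-v_2-h)$ is a Mumford pair: from $(-v-h)^2+h(-v-h)-f=v^2+hv-f$, condition 2) is preserved. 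Second, each combined pair $(u_1u_2,v_G)$ and $(u_1u_2,v_H)$ must be genuinely semi-reduced, i.e.\ no root of $u_1$ may pair with a root of $u_2$ as conjugates. This holds because $\gcd(u_1,u_2)=1$ means their $x$-coordinates are distinct, and conjugate points share the same $x$-coordinate.

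\textbf{Computing $\gcd(G,H)$ and $\operatorname{lmd}(G,H)$.} So $G=A+B$ and $H=A+\iota(B)$. The hypothesis $\gcd(u_2,2v_2+h)=1$ says that no point of $B$ is a ramification point, since ramification points are exactly those with $2y+h(x)=0$. Consequently $B$ and $\iota(B)$ have disjoint supports: at any root of $u_2$ the point $P\in\operatorname{Supp}B$ differs from $\iota(P)$, and semi-reducedness of $B$ forbids $\iota(P)$ from also lying in $\operatorname{Supp}B$. Because $\gcd$ and $\operatorname{lmd}$ are computed pointwise, this gives
\[
\gcd(G,H)=A+\gcd(B,\iota B)=A, \qquad \operatorname{lmd}(G,H)=A+B+\iota(B).
\]

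\textbf{Linear equivalence with $A$.} For each root $x_j$ of $u_2$, the sum of the two points over $x_j$ is linearly equivalent to $2P_\infty$; more precisely, $B+\iota(B)=\operatorname{div}_0(u_2(x))$, so $B+\iota(B)\sim 2tP_\infty$. Similarly $\mathcal D\sim 2tP_\infty$, since each $P_i+\iota(P_i)\sim 2P_\infty$. It follows that
\[
\operatorname{lmd}(G,H)-\mathcal D\sim A+2tP_\infty-2tP_\infty=A.
\]

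\textbf{Main obstacle.} The step needing the most care is the identification of the affine parts of $D_{(u_1u_2,v_G)}$ and $D_{(u_1u_2,v_H)}$ when $u_2$ has repeated roots, or roots defined only over extensions of $\F_q$. In that case "$v$ interpolates with multiplicity" has to be made precise: I would argue by uniqueness in Lemma~\ref{lem:mumford}, applied to the CRT-glued pair, rather than pointwise.
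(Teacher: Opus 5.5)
Your proposal is correct and follows essentially the same route as the paper. The steps match: non-speciality of $A$ comes from the Mumford-degree criterion, the CRT congruences give $G=A+B$ and $H=A+\iota(B)$, the hypothesis $\gcd(u_2,2v_2+h)=1$ makes $B$ and $\iota(B)$ disjoint, and then $B+\iota(B)\sim 2tP_\infty\sim\mathcal D$. You are somewhat more careful than the paper, which simply asserts the CRT decomposition. It does not check that the glued pairs are semi-reduced Mumford pairs, nor does it address repeated or non-rational roots of $u_2$.
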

	
	\begin{proof}
		Since $(u_1,v_1)$ is a reduced Mumford pair with $\deg u_1=g$, the class $[D_{(u_1,v_1)}-gP_\infty]$ is represented by $(u_1, v_1)$. Note that $A=D_{(u_1,v_1)}-P_\infty$ is a divisor of degree $g-1$ and $D_{(u_1,v_1)}$ is a representative of $[A-(g-1)P_\infty]$ in $J(\X)$. By Proposition~\ref{prop:l(A)=0_iff_degu=g}, $\ell(D_{(u_1,v_1)}-P_\infty)=0$, which means that $A=D_{(u_1,v_1)}-P_\infty$ is a non-special divisor of degree $g-1$.
		
		By the congruences defining $v_G$ and $v_H$, the affine divisors corresponding to $G$ and $H$ decompose as $D_G=D_{(u_1,v_1)}+B$ and $D_H=D_{(u_1,v_1)}+\iota(B)$, respectively. The condition $\gcd(u_2,2v_2+h)=1$ guarantees that the conjugate divisors $B$ and $\iota(B)$ are disjoint. Thus, their common affine part is exactly $D_{(u_1,v_1)}$. Since both $G$ and $H$ have coefficient $-1$ at $P_\infty$, it follows that $\gcd(G,H)=D_{(u_1,v_1)}-P_\infty=A$.
		
		Finally, we have $\operatorname{lmd}(G,H)=D_A+B+\iota(B)-P_\infty$. Because any sum of $t$ conjugate pairs on the curve is linearly equivalent to $2tP_\infty$, we obtain 
		\[
		\cD = \sum_{i=1}^t (P_i + \iota(P_i)) \sim \sum_{i=1}^t 2P_\infty = 2tP_\infty \sim B + \iota(B),
		\]
		Substituting this equivalence immediately yields $\operatorname{lmd}(G,H)-\mathcal D \sim D_A-P_\infty=A$.
	\end{proof}
	
	We are now ready to describe families of LCP of codes obtained from hyperelliptic curves and the Mumford representation.
	
	\begin{theorem}
		\label{thm:LCP-hyperelliptic-Goppa}
		Keep the polynomial Mumford data above. Let $\mathcal D=\sum_{i=1}^t(P_i+\iota(P_i))$ be an evaluation divisor supported on $2t$ distinct rational points, disjoint from $\operatorname{Supp}(G)\cup\operatorname{Supp}(H)$. Assume also that $u_A u_B$ is coprime to the polynomial vanishing on the $x$-coordinates of $\operatorname{Supp}(\mathcal D)$. Then, the AG codes $\CL(\mathcal D,G)$ and $\CL(\mathcal D,H)$ form a linear complementary pair of length $2t$ and dimension $t$. Moreover, the minimum distance of each code satisfies $d\geq t-g+1$.
	\end{theorem}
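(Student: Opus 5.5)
The plan is to verify the hypotheses of Lemma~\ref{lem:lcpcriterion} for the pair $(G,H)$ with evaluation divisor $\mathcal D$ of degree $n=2t$, and then read off dimension and distance from Proposition~\ref{prop:agparams}.

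\textbf{Degrees.} The affine parts of $G$ and $H$ have degree $\deg(u_1u_2)=g+t$, so
\[
\deg G=\deg H=g+t-1 .
\]
Since $t\ge g\geq 2$, we have $g+t-1>2g-2$. We also need $g+t-1<2t$, which is equivalent to $t\geq g$. Hence $2g-2<\deg G,\deg H<n$. Proposition~\ref{prop:agparams} then gives
\[
\ell(G)=\ell(H)=(g+t-1)+1-g=t .
\]
So $\ell(G)+\ell(H)=2t=n$, which is condition (i). The same proposition shows that both codes have dimension $t$ and minimum distance $d\ge n-\deg G=2t-(g+t-1)=t-g+1$.

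\textbf{Support condition.} The requirement that $\operatorname{Supp}\mathcal D$ avoid $\operatorname{Supp}G\cup\operatorname{Supp}H$ is assumed in the statement. I read the coprimality of $u_Au_B$ (i.e.\ $u_1u_2$) with the polynomial vanishing on the $x$-coordinates of $\operatorname{Supp}\mathcal D$ as exactly what guarantees this disjointness on the affine side. It also guarantees that no $P_i$ or $\iota(P_i)$ lies over a zero of $u_1u_2$, which is used below.

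\textbf{Conditions (ii) and (iii).} By Proposition~\ref{prop:divisorial-consequences-mumford-data}, $\gcd(G,H)=A$ is non-special of degree $g-1$, which gives (ii) and the first half of (iii). The same proposition gives $\operatorname{lmd}(G,H)-\mathcal D\sim A$. Non-speciality depends only on the linear equivalence class, since $\ell(K-X)$ does, so $\operatorname{lmd}(G,H)-\mathcal D$ is non-special too. Lemma~\ref{lem:lcpcriterion} then yields the LCP.

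\textbf{Main obstacle.} The delicate step is the claim inside Proposition~\ref{prop:divisorial-consequences-mumford-data} that $\gcd(G,H)$ is exactly $D_{(u_1,v_1)}-P_\infty$. This needs $\gcd(u_1,u_2)=1$, which keeps $B$ and $\iota(B)$ from sharing points with $D_{(u_1,v_1)}$, and $\gcd(u_2,2v_2+h)=1$, which makes $B$ and $\iota(B)$ disjoint. I would re-verify this pointwise over $\overline{\mathbb F}_q$, using that the Mumford divisor of a CRT-glued pair is the sum of the two divisors. I would also double-check that $B+\iota(B)\sim 2tP_\infty\sim\mathcal D$, which holds because both equal $\operatorname{div}$ of a polynomial in $x$ of degree $t$ plus $2tP_\infty$. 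The rest is bookkeeping.
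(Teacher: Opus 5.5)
Your proof is correct and follows essentially the same route as the paper. Both arguments rest on Proposition~\ref{prop:divisorial-consequences-mumford-data} ($\gcd(G,H)=A$ is non-special of degree $g-1$, and $\operatorname{lmd}(G,H)-\mathcal D\sim A$), together with the Riemann--Roch count $\ell(G)=\ell(H)=t$ and the Goppa bound. The only difference is packaging: you conclude by citing Lemma~\ref{lem:lcpcriterion}, whereas the paper re-derives its content inline. There, a common codeword $\operatorname{ev}_{\mathcal D}(f)=\operatorname{ev}_{\mathcal D}(h)$ forces $f-h\in\mathcal L(\operatorname{lmd}(G,H)-\mathcal D)=\{0\}$, and then $f\in\mathcal L(\gcd(G,H))=\{0\}$.
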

	
	\begin{proof}
		The coprimality condition guarantees that the supports of $G$ and $H$ are disjoint from $\operatorname{Supp}(\mathcal D)$, making the evaluation maps well-defined. 
		
		By Proposition~\ref{prop:divisorial-consequences-mumford-data}, we have $\gcd(G,H)=A$ and $\operatorname{lmd}(G,H)-\mathcal D\sim A$. Since $\ell(A)=0$, it immediately follows that $\ell(\gcd(G,H))=0$ and $\ell(\operatorname{lmd}(G,H)-\mathcal D)=0$. Because $G, H \leq \operatorname{lmd}(G,H)$, this latter equality also forces $\ell(G-\mathcal D)=\ell(H-\mathcal D)=0$, ensuring that both evaluation maps are injective.
		
		To see that $\CL(\mathcal D,G) \cap \CL(\mathcal D,H) = \{0\}$, suppose $c=\operatorname{ev}_{\mathcal D}(f)=\operatorname{ev}_{\mathcal D}(h)$ is a codeword in the intersection, with $f\in\CL(G)$ and $h\in\CL(H)$. Then $f-h$ vanishes on $\mathcal D$, meaning $f-h \in \mathcal{L}(\operatorname{lmd}(G,H)-\mathcal D)=\{0\}$. Thus $f=h$, which implies $f \in \mathcal{L}(\gcd(G,H))=\{0\}$. Hence $c=0$.
		
		Finally, $\deg G = \deg H = \deg(u_1 u_2)-1 = g+t-1$. Since $t\geq g$, we have $\deg G > 2g-2$, and the Riemann-Roch theorem yields $\dim\CL(\mathcal D,G) = \dim\CL(\mathcal D,H) = t$. Because their dimensions sum to the code length $2t$ and their intersection is trivial, $\CL(\mathcal D,G) \oplus \CL(\mathcal D,H) = \mathbb{F}_q^{2t}$, forming a linear complementary pair. The Goppa bound guarantees the minimum distance is at least $2t-\deg G = t-g+1$ for both codes.
	\end{proof}
	
	\begin{remark}
		\rm{
			The polynomials $u_A, v_A, u_B$, and $v_B$ can be computed explicitly using standard arithmetic over $\mathbb F_q[x]$ and its extensions.
			
			\begin{enumerate}
				\item \textbf{The $u$-polynomials:} To maximize the number of rational points available for the evaluation divisor $\mathcal D$, one may choose $u_A$ and $u_B$ to be irreducible polynomials in $\mathbb F_q[x]$ of degrees $g$ and $t$, respectively. This ensures that $\gcd(u_A, u_B) = 1$ and that their roots, which lie in the extensions $\mathbb F_{q^g}$ and $\mathbb F_{q^t}$, are disjoint from the $\mathbb F_q$-rational support of $\mathcal D$. Alternatively, if $\mathcal X(\mathbb F_q)$ is sufficiently large, one may define $u_A$ and $u_B$ simply by selecting $g+t$ distinct rational points outside of $\mathcal D$ and taking the product of their corresponding linear factors $x - x_i$.
				
				\item \textbf{The $v$-polynomials:} Finding $v_A$ amounts to solving the congruence $v_A^2 + hv_A - f \equiv 0 \pmod{u_A}$ with $\deg v_A < \deg u_A$. If $u_A$ is irreducible, the quotient $\mathbb F_q[x]/\langle u_A \rangle$ is isomorphic to the finite field $\mathbb F_{q^g}$. Let $\delta \in \mathbb F_{q^g}$ be a root of $u_A$. Solving the congruence is equivalent to finding a root $\xi \in \mathbb F_{q^g}$ of the quadratic equation $Y^2 + h(\delta)Y - f(\delta) = 0$. The polynomial $v_A \in \mathbb F_q[x]$ is then the unique preimage of $\xi$ under the evaluation isomorphism $\mathbb F_q[x]/\langle u_A \rangle \xrightarrow{\sim} \mathbb F_{q^g}$ given by $x \mapsto \delta$. If $u_A$ splits into distinct linear factors over $\mathbb F_q$, then $v_A$ is simply the unique Lagrange interpolating polynomial satisfying $v_A(x_i) = y_i$, where $(x_i, y_i) \in \mathcal X(\mathbb F_q)$. The polynomial $v_B$ is constructed analogously modulo $u_B$, ensuring that $\gcd(u_B, 2v_B+h)=1$ by avoiding ramification points.
				
				\item \textbf{The Chinese Remainder Theorem (CRT):} Since $\gcd(u_A, u_B) = 1$, the Extended Euclidean Algorithm yields polynomials $a, b \in \mathbb F_q[x]$ such that $a u_A + b u_B = 1$. The polynomials $v_G$ and $v_H$ are given by the standard ring isomorphism 
				\[
				\mathbb F_q[x]/\langle u_A u_B \rangle \xrightarrow{ \ \sim \ } \mathbb F_q[x]/\langle u_A \rangle \times \mathbb F_q[x]/\langle u_B \rangle,
				\]
				yielding the explicit formulas:
				\[
				v_G \equiv v_B a u_A + v_A b u_B \pmod{u_A u_B}
				\]
				and
				\[
				v_H \equiv (-v_B - h) a u_A + v_A b u_B \pmod{u_A u_B}.
				\]
			\end{enumerate}
		}
	\end{remark}
	
	\begin{proposition}
		\label{prop:lcp-y2+y=x^q+1}
		Let $q>2$ be an even prime power and let $\mathcal{X}$ be the hyperelliptic curve defined by $y^2 + y = x^{q+1}$ over $\mathbb{F}_{q^2}$, with a unique point at infinity $P_\infty$. Let $k$ be a divisor of $q-1$, set $t = kq$, and define the evaluation polynomial $p_{\mathcal{D}}(x) = (x^q+x)^k - 1$. Let $(u_A,v_A)$ and $(u_B,v_B)$ be Mumford pairs determining effective affine divisors $D_A$ and $B$, respectively. Assume that:
		\begin{enumerate}[label=(\roman*)]
			\item $(u_A, v_A)$ is reduced, with $u_A$ monic of degree $g$;
			\item $(u_B, v_B)$ is semi-reduced, with $u_B$ monic of degree $kq$;
			\item $\gcd(u_A, u_B) = 1$;
			\item $\gcd(u_Au_B, x \cdot p_{\mathcal{D}}(x)) = 1$.
		\end{enumerate}
		
		For the evaluation divisor $\mathcal{D} = \operatorname{div}_0(p_{\mathcal{D}}(x))$ and the code divisors $G = D_A + B - P_\infty$ and $H = D_A + \iota(B) - P_\infty$, the algebraic geometry codes $\mathcal{C}_\mathcal{L}(\mathcal{D}, G)$ and $\mathcal{C}_\mathcal{L}(\mathcal{D}, H)$ form a linear complementary pair of length $2t$ and dimension $t$, with minimum distances $d \ge t - g + 1$.
	\end{proposition}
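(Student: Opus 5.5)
The plan is to obtain the statement as a specialization of Theorem~\ref{thm:LCP-hyperelliptic-Goppa}. I would check one by one that the hypotheses of that theorem hold for $\mathcal X\colon y^2+y=x^{q+1}$, with $h=1$, $f=x^{q+1}$ and $g=q/2$ (here $\deg f=q+1=2g+1$), and with the evaluation divisor $\mathcal D=\operatorname{div}_0(p_{\mathcal D}(x))$.

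\textbf{Step 1: the Mumford data.}
\begin{itemize}
\item Since $\operatorname{char}=2$, we have $2v_B+h=1$. Hence $\gcd(u_B,2v_B+h)=1$ holds automatically.
\item The same computation shows the curve has no affine ramification points.
\item Hypotheses (i)--(iii) are exactly the input data required before Proposition~\ref{prop:divisorial-consequences-mumford-data}, with $t=kq$.
\item Also $t=kq\ge q>q/2=g$, so $t\ge g$.
\item With $u_1=u_A$, $v_1=v_A$, $u_2=u_B$, $v_2=v_B$, the CRT construction gives $G=D_A+B-P_\infty$ and $H=D_A+\iota(B)-P_\infty$.
\end{itemize}

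\textbf{Step 2: the evaluation divisor (main work).} I must show that $\mathcal D$ is a sum of $t$ conjugate pairs of distinct $\mathbb F_{q^2}$-rational points.
\begin{itemize}
\item The map $x\mapsto x^q+x$ is the trace $\mathbb F_{q^2}\to\mathbb F_q$. It is $\mathbb F_q$-linear, surjective and $q$-to-one.
\item Since $k\mid q-1$, the equation $b^k=1$ has exactly $k$ solutions $b\in\mathbb F_q^*$.
\item Hence $p_{\mathcal D}$ has exactly $kq=t$ roots in $\mathbb F_{q^2}$. These roots are simple: $p_{\mathcal D}$ has degree $kq$, and its derivative $k(x^q+x)^{k-1}$ is nonzero at each root because $x^q+x=b\neq0$ there.
\item For each root $a$, the value $c=a^{q+1}$ is a norm and lies in $\mathbb F_q$. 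The equation $y^2+y=c$ is solvable in $\mathbb F_{q^2}$ because $\operatorname{Tr}_{\mathbb F_{q^2}/\mathbb F_2}(c)=\operatorname{Tr}_{\mathbb F_q/\mathbb F_2}(c+c^q)=\operatorname{Tr}_{\mathbb F_q/\mathbb F_2}(2c)=0$.
\item Its two solutions $y$ and $y+1$ are distinct and give the conjugate pair $P,\iota(P)$ over $x=a$.
\item Since $x$ has its only pole at $P_\infty$, we get $\operatorname{div}_0(p_{\mathcal D})=\sum_{i=1}^t(P_i+\iota(P_i))$, supported on $2t$ distinct rational points.
\end{itemize}
I expect this point count to be the main obstacle: it needs both the surjectivity of the trace and the solvability of the Artin--Schreier equation over $\mathbb F_{q^2}$.

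\textbf{Step 3: disjointness and conclusion.}
\begin{itemize}
\item Hypothesis (iv) gives that $u_Au_B$ is coprime to $p_{\mathcal D}$, which is the polynomial vanishing on the $x$-coordinates of $\operatorname{Supp}(\mathcal D)$.
\item Therefore no point of $\operatorname{Supp}(G)\cup\operatorname{Supp}(H)$ lies over a root of $p_{\mathcal D}$. Also $P_\infty\notin\operatorname{Supp}(\mathcal D)$.
\item The extra factor $x$ in (iv) is stronger than needed here. I would only remark that it excludes points over $x=0$, since $p_{\mathcal D}(0)=-1\neq0$ means these are not in $\mathcal D$ anyway.
\item Theorem~\ref{thm:LCP-hyperelliptic-Goppa} then gives that $\CL(\mathcal D,G)$ and $\CL(\mathcal D,H)$ form an LCP of length $2t$ and dimension $t$, each with minimum distance $d\ge t-g+1$.
\end{itemize}
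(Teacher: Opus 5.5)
Your proposal is correct and follows essentially the same route as the paper. Both arguments specialize Theorem~\ref{thm:LCP-hyperelliptic-Goppa}: count the $kq$ roots of $p_{\mathcal D}$ via the surjective $q$-to-one trace and the $k$ distinct $k$-th roots of unity in $\mathbb F_q^*$, use characteristic $2$ to see that $\iota$ has no affine fixed points, and invoke (iv) for disjointness. Your extra checks make explicit details the paper leaves implicit: that $t\ge g$, that $\gcd(u_B,2v_B+h)=1$ holds automatically since $2v_B+h=1$, and that the points above each root are $\mathbb F_{q^2}$-rational because $\operatorname{Tr}_{\mathbb F_{q^2}/\mathbb F_2}(a^{q+1})=0$.
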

	
	\begin{proof}
		We apply Theorem \ref{thm:LCP-hyperelliptic-Goppa}. First, observe that the roots of $p_{\mathcal{D}}(x) = (x^q+x)^k - 1$ correspond to elements $\alpha \in \mathbb{F}_{q^2}$ such that the trace $\operatorname{Tr}_{\mathbb{F}_{q^2}/\mathbb{F}_q}(\alpha) = \alpha^q + \alpha$ is a $k$-th root of unity in $\mathbb{F}_q^*$. Since $k \mid q-1$, there are exactly $k$ such distinct roots of unity in $\mathbb{F}_q^*$. So, it yields exactly $t = kq$ distinct $x$-coordinates in $\mathbb{F}_{q^2}$.
		Because $q$ is even, the hyperelliptic involution $\iota(x, y) = (x, y+1)$ acts without fixed points in the affine plane. Consequently, $\mathcal{D}$ is an evaluation divisor supported on $2t$ distinct rational points and can be written as $\sum_{i=1}^t (P_i + \iota(P_i))$.
		
		Next, the polynomial vanishing on the $x$-coordinates of $\operatorname{Supp}(\mathcal{D})$ is precisely $p_{\mathcal{D}}(x)$. Condition (iv) guarantees that $\gcd(u_Au_B, p_{\mathcal{D}}(x)) = 1$, which ensures that the supports of $G$ and $H$ are disjoint from $\operatorname{Supp}(\mathcal{D})$. 
		
		The remaining hypotheses of Theorem \ref{thm:LCP-hyperelliptic-Goppa} are immediately satisfied by conditions (i), (ii), and (iii) along with the definitions of $G$ and $H$. Thus, it follows that $\mathcal{C}_\mathcal{L}(\mathcal{D}, G)$ and $\mathcal{C}_\mathcal{L}(\mathcal{D}, H)$ form an LCP of length $2t$ and dimension $t$ and the minimum distance of each code satisfies $d \ge kq - g + 1$, by Proposition \ref{prop:agparams}.
	\end{proof}

	We now provide a practical criterion to decide when this LCP consists of MDS codes. Write $\operatorname{Supp}(\mathcal D)=\{R_1,\ldots,R_{2t}\}$. For each subset $I\subseteq\{1,\ldots,2t\}$ of size $t$, define the Jacobian class $\gamma_I=[\sum_{i\in I}R_i-tP_\infty]\in J(\mathcal X)$. Additionally, let $\alpha=[D_A-gP_\infty]$ and $\beta=[B-tP_\infty]$.
	
	\begin{proposition}
		\label{prop:practical-theta-avoidance}
		With the notation above, the codes $\CL(\mathcal D,G)$ and $\CL(\mathcal D,H)$ are MDS if and only if, for every subset $I\subseteq\{1,\ldots,2t\}$ of size $t$, the reduced Mumford representatives of the classes $\alpha+\beta-\gamma_I$ and $\alpha-\beta-\gamma_I$, respectively, have $u$-polynomials of degree $g$.
	\end{proposition}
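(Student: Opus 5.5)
We reduce the MDS property to a non-speciality statement for degree-$(g-1)$ divisors and then apply Proposition~\ref{prop:l(A)=0_iff_degu=g}. By Theorem~\ref{thm:LCP-hyperelliptic-Goppa}, $\CL(\mathcal D,G)$ has length $n=2t$ and dimension $k=t$. So it is MDS if and only if no nonzero codeword has weight $\le t$, i.e.\ $d=t+1$.

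A nonzero codeword $\operatorname{ev}_{\mathcal D}(z)$, with $z\in\mathcal L(G)$, vanishes at a set of positions $I$ with $|I|\ge t$. Shrinking $I$ to size exactly $t$, the code fails to be MDS if and only if there is a subset $I$ of size $t$ with $\mathcal L\bigl(G-\sum_{i\in I}R_i\bigr)\neq 0$. For this we use that $\operatorname{ev}_{\mathcal D}$ is injective, which was shown in Theorem~\ref{thm:LCP-hyperelliptic-Goppa}, so a nonzero $z$ gives a nonzero codeword. The converse direction is immediate, since a nonzero $z$ in that space gives a nonzero codeword vanishing on $I$.

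Hence $\CL(\mathcal D,G)$ is MDS if and only if $\ell\bigl(G-\sum_{i\in I}R_i\bigr)=0$ for every $|I|=t$. Each such divisor has degree $(g+t-1)-t=g-1$.

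Now apply Proposition~\ref{prop:l(A)=0_iff_degu=g} to $A_I:=G-\sum_{i\in I}R_i$. It says $\ell(A_I)=0$ if and only if the reduced Mumford representative of $[A_I-(g-1)P_\infty]$ has $u$-polynomial of degree $g$. This class is computed by writing $G=D_A+B-P_\infty$, which gives
\[
A_I-(g-1)P_\infty=(D_A-gP_\infty)+(B-tP_\infty)-\Bigl(\sum_{i\in I}R_i-tP_\infty\Bigr),
\]
since the $P_\infty$ coefficients sum to $-1-(g-1)=-g-t+t$. Its class is therefore $\alpha+\beta-\gamma_I$.

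For $H=D_A+\iota(B)-P_\infty$, the same computation gives $\alpha+[\iota(B)-tP_\infty]-\gamma_I$. Since $B+\iota(B)\sim 2tP_\infty$ (it is the divisor of $u_B(x)$), we have $[\iota(B)-tP_\infty]=-\beta$, and the class becomes $\alpha-\beta-\gamma_I$.

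The only delicate step is the reduction of ``MDS'' to the vanishing of $\ell(G-\sum_{i\in I}R_i)$ for all $|I|=t$. One must check both directions, including the use of injectivity, and make sure that codewords vanishing on more than $t$ positions are covered by subsets of size exactly $t$. After that, the argument is bookkeeping of $P_\infty$ coefficients together with the relation $\iota(B)\sim 2tP_\infty-B$. We also read ``respectively'' as pairing the class $\alpha+\beta-\gamma_I$ with $G$ and the class $\alpha-\beta-\gamma_I$ with $H$, and we prove the two equivalences separately.
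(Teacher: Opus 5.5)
Your proposal is correct and follows the paper's proof essentially step for step. You reduce MDS to $\ell\bigl(G-\sum_{i\in I}R_i\bigr)=0$ for all $|I|=t$, apply Proposition~\ref{prop:l(A)=0_iff_degu=g} to these degree-$(g-1)$ divisors, and split the class as $\alpha+\beta-\gamma_I$ (resp.\ $\alpha-\beta-\gamma_I$, via $B+\iota(B)=\operatorname{div}(u_B)+2tP_\infty$). The only difference is that you spell out the MDS criterion (injectivity of $\operatorname{ev}_{\mathcal D}$ and shrinking zero sets to size exactly $t$), which the paper simply cites.
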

	
	\begin{proof}
		By the MDS criterion for AG codes of length $2t$ and dimension $t$, $\CL(\mathcal D,G)$ is MDS if and only if $\ell(G-\sum_{i\in I}R_i)=0$ for every subset $I$ of size $t$. Since $\deg(G-\sum_{i\in I}R_i)=g-1$, Proposition~\ref{prop:l(A)=0_iff_degu=g} states that this dimension is zero if and only if the degree zero class $[G-\sum_{i\in I}R_i-(g-1)P_\infty]$ has a Mumford $u$-polynomial of degree $g$.
		
		Recalling that $G = D_A + B - P_\infty$, we can expand and group the terms of this class as
		\[
		\left[D_A+B-\sum_{i\in I}R_i-gP_\infty\right] = [D_A-gP_\infty] + [B-tP_\infty] - \left[\sum_{i\in I}R_i-tP_\infty\right] = \alpha+\beta-\gamma_I.
		\]
		
		The argument for $\CL(\mathcal D,H)$ is identical, since $H = D_A + \iota(B) - P_\infty$ and $B+\iota(B)\sim 2tP_\infty$, we have $[\iota(B)-tP_\infty] = -[B-tP_\infty] = -\beta$. Substituting this into the same expansion yields the class $\alpha-\beta-\gamma_I$.
	\end{proof}
	
	\begin{corollary}
		\label{cor:theta-avoidance-two-torsion}
		Assume, in addition, that $2\alpha=0$ in $J(\mathcal X)$. Equivalently, for the Mumford pair $(u_A,v_A)$, assume that $u_A\mid 2v_A+h$. Then, if $\CL(\mathcal D,G)$ is MDS, the code $\CL(\mathcal D,H)$ is also MDS.
	\end{corollary}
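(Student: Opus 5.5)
The plan is to deduce the MDS property of $\CL(\mathcal D,H)$ directly from Proposition~\ref{prop:practical-theta-avoidance}. Two symmetries will be used. The first is that negation in $J(\mathcal X)$ does not change the degree of the $u$-polynomial. The second is that the hyperelliptic involution $\iota$ permutes $\operatorname{Supp}(\mathcal D)$.

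\textbf{Step 1: the equivalent formulation of $2\alpha=0$.} If a class is represented by the reduced pair $(u,v)$, then its negative is represented by $\iota$ of the divisor. This has Mumford pair $(u,\,-v-h \bmod u)$, which is again reduced and has the same $u$. By uniqueness of the reduced representative (Lemma~\ref{lem:mumford}), $\alpha=-\alpha$ holds iff $-v_A-h\equiv v_A \pmod{u_A}$, that is, iff $u_A\mid 2v_A+h$. The same observation also shows that $\deg u_{-\delta}=\deg u_\delta$ for every $\delta\in J(\mathcal X)$.

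\textbf{Step 2: the involution on index sets.} Since $\mathcal D=\sum_{i=1}^t(P_i+\iota(P_i))$ consists of $2t$ distinct points, $\iota$ induces a permutation of $\{1,\dots,2t\}$. For $|I|=t$, let $\iota(I)$ denote the image of $I$ under this permutation; it is again a subset of size $t$. Because $R+\iota(R)\sim 2P_\infty$ for every point $R$, we get
\[
\gamma_{\iota(I)}=\Big[\sum_{i\in I}\iota(R_i)-tP_\infty\Big]=-\gamma_I .
\]

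\textbf{Step 3: conclusion.} Assume $\CL(\mathcal D,G)$ is MDS. By Proposition~\ref{prop:practical-theta-avoidance}, $\deg u_{\alpha+\beta-\gamma_J}=g$ for every $J$ of size $t$. Fix $I$ of size $t$. Using $\alpha=-\alpha$ and Step 2,
\[
\alpha-\beta-\gamma_I=-(-\alpha+\beta+\gamma_I)=-(\alpha+\beta-\gamma_{\iota(I)}).
\]
By hypothesis, the class $\alpha+\beta-\gamma_{\iota(I)}$ has a $u$-polynomial of degree $g$. By Step 1, so does its negative. Since $I$ was arbitrary, Proposition~\ref{prop:practical-theta-avoidance} then gives that $\CL(\mathcal D,H)$ is MDS.

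There is no real obstacle in this argument. The only points needing care are that $-v_A-h$ must be reduced modulo $u_A$ (possible since $\deg h\le g$) for the uniqueness argument in Step 1, and that $\iota(I)$ is a genuine $t$-subset of the support. The latter is guaranteed by the distinctness of the $2t$ points of $\mathcal D$.
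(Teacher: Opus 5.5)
Your proof is correct and follows the paper's argument. You use $-\alpha=\alpha$ to write the $H$-class for $I$ as the negative of a $G$-class for another $t$-subset, and then note that negation $(u,v)\mapsto(u,-v-h \bmod u)$ preserves the $u$-polynomial. The only differences are that the paper takes the complement $I^c$ (via $\gamma_I+\gamma_{I^c}=[\mathcal D-2tP_\infty]=0$) where you take $\iota(I)$, and that you also prove the equivalence $2\alpha=0\iff u_A\mid 2v_A+h$, which the paper only asserts.
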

	
	\begin{proof}
		Assume $\CL(\mathcal D,G)$ is MDS. Let $I \subseteq \{1,\ldots,2t\}$ be an arbitrary subset of size $t$, and let $I^c$ be its complement. Since $\mathcal D = \sum_{i=1}^{2t} R_i \sim 2tP_\infty$, we have $\gamma_I + \gamma_{I^c} = 0$, which means $\gamma_I = -\gamma_{I^c}$.
		
		Using the hypothesis $2\alpha=0$ (which implies $-\alpha=\alpha$), we relate the MDS condition for $H$ on the subset $I$ to the condition for $G$ on $I^c$ by taking the inverse in the Jacobian:
		\[
		-(\alpha-\beta-\gamma_I) = -\alpha+\beta+\gamma_I = \alpha+\beta-\gamma_{I^c}.
		\]
		Because $I^c$ also has size $t$, Proposition~\ref{prop:practical-theta-avoidance} guarantees that the reduced representative of $\alpha+\beta-\gamma_{I^c}$ has a $u$-polynomial of degree $g$. Taking the inverse of a Jacobian class preserves its $u$-polynomial, since the inverse of $(u,v)$ is $(u,-v-h\pmod u)$. Thus, the class $\alpha-\beta-\gamma_I$ must also have $u$-degree $g$. By Proposition~\ref{prop:practical-theta-avoidance}, $\CL(\mathcal D,H)$ is MDS.
	\end{proof}
	
	Combining the LCP construction with the theta-avoidance test gives the following form of the main construction.
	
	\begin{theorem}[MDS LCPs from Mumford data]
		\label{thm:MDS-LCP-Mumford-data}
		Keep the polynomial Mumford data $(u_A,v_A)$ and $(u_B,v_B)$, and let $G,H$ be the divisors induced by $(u_A u_B,v_G)$ and $(u_A u_B,v_H)$, respectively. Let $\mathcal D$ be an evaluation divisor supported on $2t$ distinct rational points forming $t$ conjugate pairs, disjoint from $\operatorname{Supp}(G)\cup\operatorname{Supp}(H)\cup\{P_\infty\}$.
		
		Write $\operatorname{Supp}(\mathcal D)=\{R_1,\ldots,R_{2t}\}$. Assume that, for every subset $I\subseteq\{1,\ldots,2t\}$ of size $t$, the reduced Mumford representatives of the classes
		\[
		\alpha+\beta-\gamma_I \quad\text{and}\quad \alpha-\beta-\gamma_I
		\]
		have $u$-polynomials of degree $g$. Then $\CL(\mathcal D,G)$ and $\CL(\mathcal D,H)$ form an MDS linear complementary pair. In particular, both codes have parameters
		\[
		[2t,t,t+1]_q.
		\]
	\end{theorem}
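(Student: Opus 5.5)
The plan is to combine Theorem~\ref{thm:LCP-hyperelliptic-Goppa} with Proposition~\ref{prop:practical-theta-avoidance}. No new geometric input is needed; the work is checking that the hypotheses of those two results hold under the present assumptions.

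\textbf{Step 1: the LCP property.} I will check the hypotheses of Theorem~\ref{thm:LCP-hyperelliptic-Goppa}. The Mumford data $(u_A,v_A)$, $(u_B,v_B)$ satisfy $\deg u_A=g$, $\deg u_B=t\ge g$, $\gcd(u_A,u_B)=1$ and $\gcd(u_B,2v_B+h)=1$. The evaluation divisor $\mathcal D$ is a sum of $t$ conjugate pairs on $2t$ distinct rational points, disjoint from $\operatorname{Supp}(G)\cup\operatorname{Supp}(H)\cup\{P_\infty\}$.

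In the earlier theorem, the coprimality of $u_Au_B$ with the $x$-coordinate polynomial of $\mathcal D$ was used only to ensure this support disjointness. Here the disjointness is assumed directly. The mild point is that $G$ and $H$ themselves contain $P_\infty$ in their support, so I will note that disjointness from $P_\infty$ is exactly what makes the evaluation well defined there.

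With these hypotheses in place, Theorem~\ref{thm:LCP-hyperelliptic-Goppa} (via Proposition~\ref{prop:divisorial-consequences-mumford-data}) gives that $\CL(\mathcal D,G)$ and $\CL(\mathcal D,H)$ form an LCP of length $2t$. Each code has dimension $t$, since $\deg G=\deg H=g+t-1>2g-2$ and $\ell(G-\mathcal D)=\ell(H-\mathcal D)=0$.

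\textbf{Step 2: the MDS property.} I will invoke Proposition~\ref{prop:practical-theta-avoidance}. The assumed degree-$g$ condition on the reduced representatives of $\alpha+\beta-\gamma_I$ and $\alpha-\beta-\gamma_I$, for all $t$-subsets $I$, is exactly its criterion. Behind that criterion, Proposition~\ref{prop:l(A)=0_iff_degu=g} gives $\ell(G-\sum_{i\in I}R_i)=0$ and $\ell(H-\sum_{i\in I}R_i)=0$ for every $I$ with $|I|=t$.

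From this I will derive $d\ge t+1$ directly. If a nonzero codeword of $\CL(\mathcal D,G)$ had weight at most $t$, it would vanish on some $t$ points $R_i$, $i\in I$. The corresponding function would then lie in $\mathcal L(G-\sum_{i\in I}R_i)=\{0\}$, and injectivity of evaluation gives a contradiction. Together with the Singleton bound $d\le 2t-t+1$, this yields $d=t+1$, so both codes are $[2t,t,t+1]_q$.

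\textbf{Main obstacle.} The only delicate point is the bookkeeping of the class identification in Proposition~\ref{prop:practical-theta-avoidance}. This is the identity $[\iota(B)-tP_\infty]=-\beta$, which relies on $B+\iota(B)\sim 2tP_\infty$. It also requires that the vanishing set of a low-weight codeword can always be enlarged to exactly $t$ points; this holds since the weight is at most $t$ and $2t-t=t$ coordinates are then available. Both are already handled by the earlier results, so the argument is essentially a direct assembly.
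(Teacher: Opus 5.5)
Your proposal is correct and follows the paper's proof: the LCP property and dimension $t$ come from Theorem~\ref{thm:LCP-hyperelliptic-Goppa}, the MDS property comes from Proposition~\ref{prop:practical-theta-avoidance} (via Proposition~\ref{prop:l(A)=0_iff_degu=g}), and $d=t+1$ follows from the Singleton bound. Your additions are harmless refinements of that same argument: you spell out the weight argument that turns $\ell(G-\sum_{i\in I}R_i)=0$ into $d\ge t+1$, and you observe that the assumed support disjointness replaces the coprimality hypothesis of Theorem~\ref{thm:LCP-hyperelliptic-Goppa}.
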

	
	\begin{proof}
		By Theorem~\ref{thm:LCP-hyperelliptic-Goppa}, the codes $\CL(\mathcal D,G)$ and $\CL(\mathcal D,H)$ form a linear complementary pair of length $2t$ and dimension $t$. Moreover, Proposition~\ref{prop:practical-theta-avoidance} ensures that the condition on the $u$-polynomial degrees is exactly the necessary and sufficient condition for both of these codes to be MDS. Finally, since an MDS code of length $n=2t$ and dimension $k=t$ meets the Singleton bound $d=n-k+1$, both codes achieve a minimum distance of $t+1$. Thus, they share the parameters $[2t,t,t+1]_q$.
	\end{proof}
	
	\begin{remark}
		\rm{
			The classical MDS conjecture imposes an immediate restriction on the parameters of LCPs of MDS codes. If $(\C_1, \C_2)$ is an LCP of non-trivial MDS codes over $\mathbb F_q$ with $\dim \C_i=k_i$, then $k_1+k_2=n$. Under the MDS conjecture, one must have $n\leq q+1$, except possibly when $q$ is even and $\{k_1,k_2\} = \{3,q-1\}$, in which case $n=q+2$ may occur. Consequently, LCP of MDS codes cannot exist for arbitrary lengths and complementary dimensions; their parameters are inherently constrained by the classical MDS conjecture applied to both components.
		}
	\end{remark}
	\subsection{Explicit LCD Codes}
	\label{subsec:lcd_constructions}
	
	In this subsection, we focus on the explicit construction of Linear Complementary Dual(LCD) algebraic geometry codes. While the general theory of LCP codes considers the intersection of two distinct code spaces $\CL(\mathcal{D}, G)$ and $\CL(\mathcal{D}, H)$, a code $\mathcal{C}$ is LCD if and only if it forms an LCP with its own Euclidean dual, meaning $\mathcal{C} \cap \mathcal{C}^{\perp} = \{0\}$.
	
	To systematically achieve this property on hyperelliptic curves, we characterize the duals of our constructed AG codes by computing explicit residues of certain differential forms. By imposing a $2$-torsion condition in the Jacobian variety $J(\mathcal{X})$, which translates directly into a divisibility condition on the Mumford polynomials, we derive explicit multiplier vectors $\boldsymbol{\mu}$ that transform our evaluation codes into LCD codes. Finally, we apply this framework to maximal hyperelliptic curves over $\mathbb{F}_{q^2}$ to yield explicit families of MDS LCD codes with optimal parameters.
	
	To explicitly characterize the Euclidean dual of our codes, we rely on the classical regular differential 
	\[
	\omega = \frac{dx}{2y+h(x)},
	\]
	whose divisor is exactly
	\begin{equation}
		\label{eq:CanonicalDivHyp}
		K_{\mathcal{X}} \;=\; (2g-2)P_\infty
	\end{equation} 
	(see, e.g., \cite[Section 6.2]{Stichtenoth2009}). By combining residue computations of $\omega$ with a $2$-torsion condition on the Jacobian, we obtain the following explicit criterion for LCD codes.

	\begin{theorem}
		\label{thm:duality-two-torsion-lcd}
		Keep the notation of Theorem~\ref{thm:LCP-hyperelliptic-Goppa}. Assume furthermore that
		\[
		2[D_A-gP_\infty]=0 \qquad\text{in }J(\mathcal X),
		\]
		which is equivalent to $u_A \mid (2v_A+h)$. Let $p_\mathcal{D}(x) = \prod_{i=1}^t (x - x_i)$ be the polynomial whose roots are the $x$-coordinates of the evaluation divisor $\mathcal D$. Define the vector $\mathbf{m} = (m_P)_{P \in \Supp(\mathcal D)}$ evaluated at each point $P = (x_P, y_P) \in \operatorname{Supp}(\mathcal D)$ by
		\[
		m_P = \frac{1}{p_\mathcal{D}'(x_P) (2y_P + h(x_P)) u_A(x_P) u_B(x_P)}.
		\]
		Then, the dual of $\CL(\mathcal D,G)$ is given by 
		\[
		\CL(\mathcal D,G)^\perp = \mathbf{m} \cdot \CL(\mathcal D,H) = \{ (m_P \cdot c_P)_{P \in \Supp(\mathcal D)} \mid \mathbf{c} \in \CL(\mathcal D,H) \}.
		\]
		Furthermore, if there exist elements $\mu_P \in \mathbb{F}_q^*$ such that $\mu_P^2 = m_P$ (which is always possible over a suitable extension or by careful choice of $x_i$), then the AG code 
		\[
		\mathcal{C} = \boldsymbol{\mu} \cdot \CL(\mathcal D, G) = \{ (\mu_P z(P))_{P \in \Supp(\mathcal D)} \mid z \in \mathcal{L}(G) \}
		\]
		is an LCD code.
	\end{theorem}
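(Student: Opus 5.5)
The plan is to realise the pairing between $\CL(\mathcal D,G)$ and $\mathbf m\cdot\CL(\mathcal D,H)$ as a sum of residues of one explicit differential. The residue theorem then gives orthogonality, and a dimension count gives equality with the dual. For $f\in\mathcal L(G)$ and $z\in\mathcal L(H)$ I will consider
\[
\eta_{f,z}\;=\;\frac{f\,z}{p_{\mathcal D}(x)\,u_A(x)\,u_B(x)}\,\omega,\qquad \omega=\frac{dx}{2y+h(x)},
\]
and show that its only poles are simple poles at the points of $\operatorname{Supp}(\mathcal D)$, with $\operatorname{res}_P(\eta_{f,z})=m_P f(P)z(P)$.

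\textbf{Step 1: the 2-torsion hypothesis.} First I translate the hypothesis into a statement about divisors. The divisibility $u_A\mid 2v_A+h$ means that at every root of $u_A$ the point $(x_i,v_A(x_i))$ satisfies $2y+h=0$, i.e.\ it is a ramification point. Since $D_A$ is reduced, Definition~\ref{def:semireduced} forces multiplicity $1$ at such points, so $\iota(D_A)=D_A$. Conversely, $\iota(D_A)=D_A$ gives $2[D_A-gP_\infty]=[D_A+\iota(D_A)-2gP_\infty]=0$, and by uniqueness of the reduced representative the two conditions are equivalent. Consequently the affine zero divisor of $u_A(x)u_B(x)$ is $D_A+\iota(D_A)+B+\iota(B)=2D_A+B+\iota(B)$.

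\textbf{Step 2: divisor of $\eta_{f,z}$.} The affine poles of $fz$ are bounded by $(D_A+B)+(D_A+\iota(B))=2D_A+B+\iota(B)$, and this is exactly cancelled by the zeros of $u_Au_B$. Also, $\omega$ has no affine zeros or poles because $K_{\mathcal X}=(2g-2)P_\infty$ by \eqref{eq:CanonicalDivHyp}. Hence the only affine poles of $\eta_{f,z}$ come from $p_{\mathcal D}(x)$, and these are simple poles at the $2t$ points of $\mathcal D$, which are unramified since they come in distinct conjugate pairs.

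At $P_\infty$ I count orders. Both $G$ and $H$ have coefficient $-1$ there, so $v_{P_\infty}(fz)\ge 2$. Further, $v_{P_\infty}(1/p_{\mathcal D})=2t$, $v_{P_\infty}(1/(u_Au_B))=2(g+t)$, and $v_{P_\infty}(\omega)=2g-2$. The total order is therefore positive, so $\eta_{f,z}$ is regular at $P_\infty$.

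At $P=(x_P,y_P)\in\operatorname{Supp}(\mathcal D)$, $x-x_P$ is a local parameter because $P$ is unramified. Hence
\[
\operatorname{res}_P\eta_{f,z}=\frac{f(P)z(P)}{p_{\mathcal D}'(x_P)(2y_P+h(x_P))u_A(x_P)u_B(x_P)}=m_Pf(P)z(P).
\]
This is well defined and nonzero: $2y_P+h(x_P)\neq0$ since $P$ is not ramified, and $u_Au_B$ is coprime to $p_{\mathcal D}$ by hypothesis.

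\textbf{Step 3: the dual.} The residue theorem gives $\sum_P m_Pf(P)z(P)=0$, so $\mathbf m\cdot\CL(\mathcal D,H)\subseteq\CL(\mathcal D,G)^\perp$. By Theorem~\ref{thm:LCP-hyperelliptic-Goppa} both codes have dimension $t$, and the length is $2t$, so the inclusion is an equality.

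\textbf{Step 4: the LCD claim.} Set $\mathcal C=\boldsymbol\mu\cdot\CL(\mathcal D,G)$. Rescaling coordinates gives $\mathcal C^\perp=\boldsymbol\mu^{-1}\cdot\CL(\mathcal D,G)^\perp=\boldsymbol\mu^{-1}\mathbf m\cdot\CL(\mathcal D,H)=\boldsymbol\mu\cdot\CL(\mathcal D,H)$, using $\mu_P^2=m_P$. Therefore $\mathcal C\cap\mathcal C^\perp=\boldsymbol\mu\cdot\bigl(\CL(\mathcal D,G)\cap\CL(\mathcal D,H)\bigr)=\{0\}$ by the LCP property.

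The main obstacle is the pole bookkeeping in Step~2. It depends on the identity $\iota(D_A)=D_A$ from Step~1 and on the point-wise interpretation of the CRT-defined divisors $D_G=D_A+B$ and $D_H=D_A+\iota(B)$, so I would verify Step~1 carefully first.
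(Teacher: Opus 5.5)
Your Step~2 has a genuine error: dividing by $u_Au_B$ does not cancel the affine poles of $fz$, it creates new ones. By your (correct) Step~1 the affine zero divisor of $u_Au_B$ is $2D_A+B+\iota(B)$. Hence the affine part of $\operatorname{div}\bigl(fz/(u_Au_B)\bigr)$ is only bounded below by $-2(2D_A+B+\iota(B))$. In general $\eta_{f,z}$ therefore has further poles at the places of $\Supp(D_A)\cup\Supp(B)\cup\Supp(\iota(B))$, and the residue theorem no longer yields $\sum_{P\in\Supp(\mathcal D)}m_Pf(P)z(P)=0$. Your own count at $P_\infty$ already signals this. It gives $v_{P_\infty}(\eta_{f,z})\ge 2+2t+2(g+t)+2g-2=4g+4t$. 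If, as you claim, the only poles were the $2t$ simple ones on $\mathcal D$, this would make $\deg\operatorname{div}(\eta_{f,z})\ge 4g+2t>2g-2$, forcing $\eta_{f,z}=0$.

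The bookkeeping does close for $\theta_{f,z}=u_Au_B\,fz\,\omega/p_{\mathcal D}(x)$. Now the zeros of $u_Au_B$ exactly absorb the affine poles of $fz$; this is where $\iota(D_A)=D_A$ is used. Moreover $v_{P_\infty}(\theta_{f,z})\ge 2+2t-2(g+t)+2g-2=0$, and
\[
\operatorname{res}_P\theta_{f,z}=\frac{u_A(x_P)\,u_B(x_P)}{p_{\mathcal D}'(x_P)\,(2y_P+h(x_P))}\,f(P)\,z(P)=m_P\bigl(u_A(x_P)u_B(x_P)\bigr)^2 f(P)\,z(P).
\]
Set $m^\ast_P:=m_P\bigl(u_A(x_P)u_B(x_P)\bigr)^2$. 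Your Steps~3 and~4 then go through verbatim and give $\CL(\mathcal D,G)^\perp=\mathbf m^\ast\cdot\CL(\mathcal D,H)$; that is, the factor $u_A(x_P)u_B(x_P)$ belongs in the numerator. The identity with $\mathbf m$ as stated would additionally require $\CL(\mathcal D,H)$ to be stable under coordinatewise multiplication by $\bigl(u_A(x_P)^2u_B(x_P)^2\bigr)_P$. This fails in general (for an MDS code it forces that vector to be constant), so no correct argument reaches the formula exactly as written.

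The paper's proof takes a different route from yours: it identifies the dual through the dual-code theorem with $\eta=dx/(p_{\mathcal D}(2y+h))$ and the identity $(2g+2t-2)P_\infty-G=H-\operatorname{div}(u_Au_B)$, rather than through a residue-plus-dimension-count argument. It contains the same inversion where it writes $\varphi=z/(u_Au_B)$; in fact $\mathcal L\bigl(H-\operatorname{div}(u_Au_B)\bigr)=u_Au_B\cdot\mathcal L(H)$. Since $m^\ast_P/m_P$ is a square, the square hypothesis is unaffected. The LCD conclusion therefore holds for $\boldsymbol\mu^\ast\cdot\CL(\mathcal D,G)$ with $\mu^\ast_P=\mu_P\,u_A(x_P)u_B(x_P)$.
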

	
	\begin{proof}
		By Proposition \ref{prop:dualcode}, the dual of $\CL(\mathcal D, G)$ is given by
		\[
		\CL(\mathcal D, G)^\perp=\mathcal{C}_\Omega(\mathcal{D}, G) = \CL(\mathcal{D}, \mathcal{D}+\operatorname{div}(\eta)-G).
		\]
		To compute this explicitly, we choose a differential $\eta$ with simple poles at $\mathcal{D}$. 
		
		Consider the differential $\eta = \frac{dx}{p_\mathcal{D}(x)(2y+h)}$. 
		Since the roots of $p_\mathcal{D}(x)$ correspond to the $x$-coordinates of $\mathcal D$ and $\mathcal D$ avoids ramification points, the principal divisor of $p_\mathcal{D}(x)$ is $\operatorname{div}(p_\mathcal{D}(x)) = \mathcal{D} - 2tP_\infty$. 
		Thus, by Equation \eqref{eq:CanonicalDivHyp}, the divisor of $\eta$ is given by
		\[
		\operatorname{div}(\eta) = \operatorname{div}\left(\frac{dx}{2y+h}\right) - \operatorname{div}(p_\mathcal{D}(x)) = (2g-2)P_\infty - (\mathcal{D} - 2tP_\infty) = (2g+2t-2)P_\infty - \mathcal D. 
		\]
		Since $x - x_P$ is a local parameter at $P$, $\eta$ has simple poles at all points $P = (x_P, y_P)$ of $\mathcal D$ with residues
		\[
		\operatorname{res}_P(\eta) = \frac{1}{p_\mathcal{D}'(x_P) (2y_P + h(x_P))}.
		\]
		Therefore, we have
		\begin{align*}
			\CL(\mathcal D, G)^\perp & = \CL(\mathcal D, \mathcal{D} + \operatorname{div}(\eta)  - G) \\
			& = \operatorname{res}_{\mathcal{D}}(\eta) \cdot \CL(\mathcal D, (2g+2t-2)P_\infty - G)
			\\ 
			& =  \left\{ \ev_\mathcal{D}\left(  \frac{1}{p_\mathcal{D}'(x)(2y+h)}  \cdot \varphi \right) \mathrel{\Bigg|} \varphi \in \mathcal{L}((2g+2t-2)P_\infty - G) \right\},
		\end{align*}
		where $\operatorname{res}_{\mathcal{D}}(\eta) = (\operatorname{res}_P(\eta))_{P \in \Supp(\mathcal{D})}$.
		
		We now determine the relation between $G$ and $H$.
		
		By construction, $G = D_A + B - P_\infty$ and $H = D_A + \iota(B) - P_\infty$.  Assuming
		$u_A \mid 2v_A + h$
		means that for every root $\xi$ of $u_A$ we have $2v_A(\xi) + h(\xi) = 0$.
		At an affine point $P = (\xi, v_A(\xi))$ of $D_A$, the condition
		$2y_P + h(x_P) = 0$ is exactly the ramification condition: $P = \iota(P)$.
		So indeed every point of $D_A$ is a ramification point, and therefore $\iota(D_A) = D_A$. This gives us
		\[
		2D_A = \operatorname{div}(u_A) + 2gP_\infty,
		\]
		since $\deg u_A = g$.
		On the other hand, since $B + \iota(B) = \operatorname{div}(u_B) + 2tP_\infty$, adding $G$ and $H$, we have
		\begin{align*}
			G + H &= 2D_A + B + \iota(B) - 2P_\infty \\
			&= \operatorname{div}(u_A) + 2gP_\infty + \operatorname{div}(u_B) + 2tP_\infty - 2P_\infty \\
			&= \operatorname{div}(u_A u_B) + (2g+2t-2)P_\infty.
		\end{align*}
		That is, $(2g+2t-2)P_\infty - G = H - \operatorname{div}(u_A u_B)$. Therefore, any function $\varphi \in \mathcal{L}((2g+2t-2)P_\infty - G)$ can be written as $\varphi = \frac{z}{u_A u_B}$ for some $z \in \mathcal{L}(H)$. 
		
		Substituting $\varphi$ into the evaluation of the dual code, we get
		\begin{align*}
			\CL(\mathcal D, G)^\perp & = \left\{ \left( \frac{z(P)}{p_\mathcal{D}'(x_P)(2y_P+h(x_P)) u_A(x_P) u_B(x_P)} \right)_{P \in \Supp(\mathcal D)} \mathrel{\Bigg|} z \in \mathcal{L}(H) \right\} \\
			& = \mathbf{m} \cdot \CL(\mathcal D,H).
		\end{align*}

		Lastly, we prove that $\mathcal{C} = \boldsymbol{\mu} \cdot \CL(\mathcal D, G)$ is an LCD code. The dual of $\mathcal{C}$ is given by
		\[
		\mathcal C^\perp = \boldsymbol{\mu}^{-1} \cdot  \CL(\mathcal D, G)^\perp = \boldsymbol{\mu}^{-1} \cdot \mathbf{m} \cdot  \CL(\mathcal D,H).
		\]
		Since we chose $\mu_P^2 = m_P$, we have $\mu_P^{-1} m_P = \mu_P$. Thus, $\mathcal{C}^\perp = \boldsymbol{\mu} \cdot \CL(\mathcal D,H)$. 
		Hence,
		\[
		\mathcal{C} \cap \mathcal{C}^\perp = (\boldsymbol{\mu} \cdot \CL(\mathcal D, G)) \cap (\boldsymbol{\mu} \cdot \CL(\mathcal D, H)) = \boldsymbol{\mu} \cdot  (\CL(\mathcal D, G) \cap \CL(\mathcal D, H)).
		\]
		By Theorem~\ref{thm:LCP-hyperelliptic-Goppa}, $\CL(\mathcal D, G) \cap \CL(\mathcal D, H) = \{0\}$. Since $\boldsymbol{\mu}$ is a vector of non-zero elements, the intersection $\mathcal{C} \cap \mathcal{C}^\perp$ is exactly $\{0\}$, proving that $\mathcal{C}$ is an LCD code.
	\end{proof}

	\begin{proposition}
		\label{prop:lcd-y^2=x^q+x}
		Let $q$ be an odd prime power and let $\mathcal X : y^2 = x^q + x$ be the maximal hyperelliptic curve over $\mathbb{F}_{q^2}$. Let $k$ be a divisor of $q-1$, and let $S_k \le \mathbb{F}_q^*$ be the unique subgroup of order $k$. Define the evaluation divisor
		\[
		\mathcal D = \sum_{\substack{a \in \mathbb{F}_{q^2} \\ a^q + a \in S_k}} (P_a + \iota(P_a)),
		\]
		which has degree $2kq$.
		
		Let $(u_A,0)$ be a Mumford pair where $u_A \in \mathbb{F}_{q^2}[x]$ is a divisor of $x^q + x$ of degree $g = (q-1)/2$ and let $(u_B,v_B)$ be a Mumford pair where $u_B \in \mathbb{F}_{q^2}[x]$ is an irreducible polynomial of degree $kq$. 
		Assume that for every $a \in \mathbb{F}_{q^2}$ with $a^q+a \in S_k$ we have
		\[
		u_A(a)u_B(a) \neq 0
		\]
		and that the element
		\[
		m_a := \frac{a^q+a}{2\,y_a\,k\,u_A(a)\,u_B(a)} \in \mathbb{F}_{q^2}^*
		\]
		is a square in $\mathbb{F}_{q^2}^*$.
		Let $G$ be the divisor obtained from $(u_A,0)$ and $(u_B,v_B)$ as in Theorem~\ref{thm:LCP-hyperelliptic-Goppa}. Then, there exists a vector $\boldsymbol{\mu} \in (\mathbb{F}_{q^2}^*)^{2kq}$ such that $\mu_P^2 = m_P$ for every $P \in \operatorname{Supp}(\mathcal D)$ (where $m_P$ is the multiplier from Theorem~\ref{thm:duality-two-torsion-lcd}), and the code
		\[
		\mathcal C = \boldsymbol{\mu} \cdot \CL(\mathcal D, G)
		\]
		is an LCD code with parameters $[2kq,\,kq]_{q^2}$.
	\end{proposition}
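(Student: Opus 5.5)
The plan is to reduce the statement to Theorem~\ref{thm:duality-two-torsion-lcd}. This requires three things: checking its hypotheses for the specific data, computing $m_P$ explicitly, and extracting square roots.

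\textbf{Setup.} The curve $y^2=x^q+x$ has $h=0$ and $f=x^q+x$, which is monic of degree $q=2g+1$, so $g=(q-1)/2$.

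First we verify that $(u_A,0)$ is a reduced Mumford pair. Since $u_A\mid x^q+x$, we have $u_A\mid 0^2-f$, and $\deg u_A=g$. Because $x^q+x$ is separable, the roots of $u_A$ are distinct ramification points with $y=0$, so $D_A$ is reduced. The divisibility $u_A\mid 2\cdot 0+h=0$ holds trivially, so the $2$-torsion hypothesis $2[D_A-gP_\infty]=0$ is automatic.

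For $u_B$, irreducibility of degree $kq$ gives $\gcd(u_A,u_B)=1$, since $kq>g$. It also gives $\gcd(u_B,2v_B)=1$: otherwise $u_B\mid v_B$, and then $u_B\mid f$, which is impossible since $kq>q\ge\deg f$ or $u_B$ irreducible of degree $kq>1$ cannot divide the split polynomial $x^q+x$.

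\textbf{The evaluation divisor.} Next we check that $\mathcal D$ consists of $2kq$ distinct rational points in $t=kq$ conjugate pairs. The map $a\mapsto a^q+a$ is the trace $\mathbb F_{q^2}\to\mathbb F_q$, which is surjective with fibres of size $q$. Hence there are $kq$ values of $a$. For each such $a$ we have $a^q+a\in S_k\subseteq\mathbb F_q^*$, which is a nonzero element of $\mathbb F_q$ and therefore a square in $\mathbb F_{q^2}$. So there are two $\mathbb F_{q^2}$-rational points $P_a=(a,y_a)$ and $\iota(P_a)=(a,-y_a)$ with $y_a\neq 0$. In particular they avoid ramification points and $P_\infty$. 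The hypothesis $u_A(a)u_B(a)\neq0$ gives the disjointness and coprimality required in Theorem~\ref{thm:LCP-hyperelliptic-Goppa}, with $t=kq\ge g$.

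\textbf{Computing $m_P$.} Here $p_{\mathcal D}(x)=\prod_a(x-a)=(x^q+x)^k-c$, where the constant $c$ satisfies $S_k=\{s: s^k=c\}$, i.e.\ $c=1$. Then
\[
p'_{\mathcal D}(x)=k(x^q+x)^{k-1}(qx^{q-1}+1)=k(x^q+x)^{k-1},
\]
since $q=0$ in the field. At $x=a$ we have $(a^q+a)^k=1$, so
\[
p'_{\mathcal D}(a)=k/(a^q+a).
\]
Substituting into the formula of Theorem~\ref{thm:duality-two-torsion-lcd} gives
\[
m_{P_a}=\frac{a^q+a}{2y_a\,k\,u_A(a)u_B(a)}=m_a,
\]
and similarly $m_{\iota(P_a)}=-m_a$. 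One point needs care: $k$ divides $q-1$, so $k$ is nonzero in the field.

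\textbf{Square roots.} The square condition for $\iota(P_a)$ follows from that for $P_a$, because $-1$ is a square in $\mathbb F_{q^2}$: it lies in $\mathbb F_q$, and every element of $\mathbb F_q$ is a square in $\mathbb F_{q^2}$. So we may choose $\mu_P$ with $\mu_P^2=m_P$ at every point. The LCD conclusion then follows from Theorem~\ref{thm:duality-two-torsion-lcd}, and the parameters $[2kq,kq]_{q^2}$ follow from Theorem~\ref{thm:LCP-hyperelliptic-Goppa}.

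The main obstacle is the bookkeeping in the residue computation: identifying $p_{\mathcal D}$ exactly, differentiating in characteristic $p$, and matching the sign convention for $\iota(P_a)$ to the hypothesis, which is stated only for $y_a$.
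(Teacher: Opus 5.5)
Your proposal is correct and follows the paper's proof. As in the paper, you note that $u_A \mid 2v_A+h=0$ holds trivially, compute $p_{\mathcal D}=(x^q+x)^k-1$ and $p_{\mathcal D}'(a)=k/(a^q+a)$ to get $m_{P_a}=m_a$ and $m_{\iota(P_a)}=-m_a$, and then use that $-1$ is a square in $\mathbb{F}_{q^2}$ before invoking Theorem~\ref{thm:duality-two-torsion-lcd}. Your extra checks fill in details the paper leaves implicit: that $(u_A,0)$ is reduced, that $\gcd(u_B,2v_B)=1$, that the points $P_a$ are $\mathbb{F}_{q^2}$-rational, and that $k\neq 0$.

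One small slip: the claim ``$kq>q$'' fails for $k=1$. Your alternative argument still covers that case, since an irreducible $u_B$ of degree $kq>1$ cannot divide $x^q+x$, which splits over $\mathbb{F}_{q^2}$.
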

	\begin{proof}
		We first determine the length of the code. Let $\operatorname{Tr} \colon \F_{q^2} \to \F_q$ be the trace map from $\F_{q^2}$ to $\F_q$. Since $S_k$ has order $k$ and the trace equation $\operatorname{Tr}(x)=x^q+x = c$ has exactly $q$ distinct roots in $\mathbb{F}_{q^2}$ for any $c \in \mathbb{F}_q$, the evaluation set contains exactly $kq$ affine $x$-coordinates. Since $0 \notin S_k$, these coordinates avoid the ramification points, lifting to $2kq$ distinct points on the curve. Thus, $\deg(\mathcal D) = 2kq$. The polynomial vanishing on these $x$-coordinates is $p_\mathcal{D}(x) = (x^q+x)^k - 1$ and its derivative $p_\mathcal{D}'(x)$ is given by
		\[
		p_\mathcal{D}'(x) = k(x^q+x)^{k-1}(qx^{q-1}+1)= k(x^q+x)^{k-1}.
		\]
		Observe that for any root $a$ of $p_\mathcal{D}(x)$, it holds $(a^q+a)^k = 1$; in particular, $(a^q+a)^{k-1} = \frac{1}{a^q+a}$. 
		Substituting this back yields $p_\mathcal{D}'(a) = \frac{k}{a^q+a}$. 
		
		We apply Theorem~\ref{thm:duality-two-torsion-lcd}. Since $h(x)=0$ and $v_A(x)=0$, we have $2v_A+h=0$; so the condition $u_A \mid (2v_A+h)$ is trivially satisfied. 
		Moreover,
		\[
		\CL(\mathcal D, G)^\perp = \mathbf{m} \cdot \CL(\mathcal D, H),
		\]
		where for each $P = (x_P, y_P) \in \operatorname{Supp}(\mathcal D)$,
		\[
		m_P = \frac{1}{p_{\mathcal D}'(x_P)\,(2y_P)\,u_A(x_P)\,u_B(x_P)}.
		\]
		For $P_a = (a, y_a)$ we obtain
		\[
		m_{P_a} = \frac{1}{\frac{k}{a^q+a} \cdot 2y_a \cdot u_A(a) u_B(a)} = \frac{a^q+a}{2y_a\,k\,u_A(a)u_B(a)} = m_a .
		\]
		For the conjugate point $\iota(P_a) = (a, -y_a)$,
		\[
		m_{\iota(P_a)} = -\,m_a .
		\]
		By hypothesis each $m_a$ is a square in $\mathbb{F}_{q^2}^*$. Because $q^2$ is a perfect square, $-1$ is also a square in $\mathbb{F}_{q^2}$.  Fix $\zeta \in \mathbb{F}_{q^2}^*$ with $\zeta^2 = -1$. Choose for each $a$ an element $\mu_a \in \mathbb{F}_{q^2}^*$ such that $\mu_a^2 = m_a$. Define
		\[
		\mu_{P_a} = \mu_a , \qquad \mu_{\iota(P_a)} = \zeta\,\mu_a .
		\]
		Then $\mu_P^2 = m_P$ for every $P \in \operatorname{Supp}(\mathcal D)$. By Theorem~\ref{thm:duality-two-torsion-lcd},  $\mathcal C = \boldsymbol{\mu} \cdot \CL(\mathcal D, G)$ is an LCD code over $\F_{q^2}$. Its length is $n=\deg \mathcal D = 2kq$ and its dimension, by Theorem~\ref{thm:LCP-hyperelliptic-Goppa}, equals $k_1=kq$. Hence $\mathcal C$ is an LCD $[2kq, kq]_{q^2}$-code.
	\end{proof}
	\begin{remark}
		\label{rem:MDS-k=1}
		\rm{
			For $k=1$, one might naturally expect that the polynomial $u_B$ can be chosen such that the resulting LCD code is MDS. In this case, the code has length $2q$ and dimension $q$. Indeed, there are exactly $2^q$ subsets of $\operatorname{Supp}(\mathcal D)$ of size $q$, each yielding a Jacobian class that must be avoided in order to satisfy the MDS criterion of Proposition~6. On the other hand, since $\mathcal X$ is maximal over $\mathbb F_{q^2}$, the size of its Jacobian is
			\[
			|J(\mathcal X)|=(q+1)^{2g}=(q+1)^{q-1}.
			\]
			For every $q\ge3$, we have
			\[
			2^q<(q+1)^{q-1}.
			\]
			Thus, the number of forbidden classes is strictly smaller than the total number of Jacobian classes, suggesting that there is considerable freedom in the choice of $u_B$. This heuristic motivates the following expectation.
		}
	\end{remark}
	
	\begin{conjecture}
		\rm{
			For every $q\ge 3$, there exists an irreducible polynomial $u_B\in\mathbb F_{q^2}[x]$ of degree $q$ satisfying both the square condition of Proposition~7 and the MDS criterion of Proposition~6, so that the associated LCD code $C=\mu\cdot C_L(\mathcal D,G)$ has parameters $[2q,q,q+1]_{q^2}$.
		}
	\end{conjecture}
	
	A full verification of this expectation would likely require a probabilistic or sieve-theoretic argument over the space of admissible polynomials $u_B$, simultaneously controlling the square condition and the avoidance of all $2^q$ forbidden Jacobian classes. We leave this as an open problem. In the following examples, we exhibit explicit choices of $u_A$ and $u_B$ for $q = 4, 5, 7$ for which all required conditions are verified computationally, confirming this expectation in these instances and lending concrete support to the general picture.
	
	All computations and algorithmic verifications in the following examples were performed using the computer algebra system Magma \cite{magma}.

	\begin{example}
		Let $q=4$ and $\F_{16} = \F_2(\zeta)$ with $\zeta^4+\zeta+1=0$. Let 
		\[
		\mathcal X : y^2 + y = x^5
		\]
		be a hyperelliptic curve over $\F_{16}$ of genus $g = 2$, having a single rational point $P_\infty$ at infinity. The curve has $33$ rational points over $\F_{16}$, and its hyperelliptic involution is given by $\iota(x,y) = (x, y+1)$ for affine points and $\iota(P_\infty)=P_\infty$. 
		
		Define the evaluation polynomial 
		\[
		p_\mathcal{D}(x)=(x + \zeta^2)(x + \zeta^4)(x + \zeta^9)(x + \zeta^{13}),
		\]
		and choose the Mumford pairs 
		\[
		(u_A(x), v_A(x)) = \left(x^2 + \zeta^6 x + \zeta^{10}, \ \zeta x + \zeta^7 \right)
		\]
		and
		\[
		\left( u_B(x), v_B(x) \right)  = \left( x^4 + \zeta^5x^3 + \zeta^7x^2 + \zeta^9x + \zeta^{13}, \  x^3 + x^2 + \zeta^{10}x + \zeta^4 \right).
		\]
		The polynomial $u_A$ is irreducible of degree $2$ over $\F_{16}$. Moreover, $u_B(x) = (x + \zeta^{10})(x + \zeta^5)(x + \zeta^6)(x + \zeta^7)$ splits completely over $\F_{16}$. With our choice of $v_B(x)$, the affine divisor $D_{(u_B, v_B)}$ is a sum of four $\F_{16}$-rational points of $\X$; namely,
		\[
		D_{(u_B, v_B)} = (\zeta^{10}, \zeta) + (\zeta^5, \zeta^2) + (\zeta^6, \zeta^5) + (\zeta^7, \zeta).
		\]
		Clearly, $\gcd(u_A, u_B) = 1$. By the Chinese Remainder Theorem, there exist unique polynomials $v_G, v_H \in \F_{16}[x]$ modulo $u_A u_B$ satisfying the congruences
		\begin{align*}
			v_G &\equiv \zeta x + \zeta^7 \pmod{u_A(x)}, \\
			v_G &\equiv x^3 + x^2 + \zeta^{10}x + \zeta^4 \pmod{u_B(x)},
		\end{align*}
		and, since $v_B(x) + 1 = x^3 + x^2 + \zeta^{10}x + (\zeta^4 + 1) = x^3 + x^2 + \zeta^{10}x + \zeta$,
		\begin{align*}
			v_H &\equiv \zeta x + \zeta^7 \pmod{u_A(x)}, \\
			v_H &\equiv x^3 + x^2 + \zeta^{10}x + \zeta \pmod{u_B(x)}.
		\end{align*}
		Solving this system of CRT equations yields 
		\[
		v_G(x) = \zeta^5x^5 + \zeta^6 x^4 + x^3 + x^2 + \zeta^{13} x + \zeta^8
		\]
		and 
		\[
		v_H(x) = x^5 + \zeta^6 x^4 + \zeta^4 x^3 + \zeta^{14} x^2 + \zeta x + \zeta^{14}.
		\]
		Let $G$ and $H$ be the divisors induced by the pairs $(u_A u_B, v_G)$ and $(u_A u_B, v_H)$, respectively. The roots of the polynomial $p_{\mathcal{D}}(x)$ define an evaluation divisor $\mathcal{D}$ supported on $2t = 8$ distinct rational points, forming $t = 4$ conjugate pairs. Furthermore, $\Supp(\mathcal{D})$ is disjoint from $\Supp(G) \cup \Supp(H)$. We construct the associated AG codes 
		\[
		\C_1 = \CL(\mathcal D, G) \quad \text{and} \quad \C_2 = \CL(\mathcal D, H).
		\]
		According to Theorem \ref{thm:LCP-hyperelliptic-Goppa}, the pair $(\C_1, \C_2)$ forms an LCP of codes over $\F_{16}$. By Theorem \ref{thm:MDS-LCP-Mumford-data}, $\C_1$ and $\C_2$ are guaranteed to be MDS codes if, for all $\binom{8}{4} = 70$ subsets $I \subseteq \operatorname{Supp}(\mathcal{D})$ of size $4$, the reduced Mumford representatives of the associated divisor classes in the Jacobian have $u$-polynomials of degree exactly $g=2$.
		
		An algorithmic verification of these $70$ conditions confirms that all such $u$-polynomials indeed have the required degree. Hence, the hypothesis of Theorem \ref{thm:MDS-LCP-Mumford-data} is fully satisfied, and the pair $(\C_1, \C_2)$ forms an LCP of MDS hyperelliptic codes over $\F_{16}$ with parameters $[8, 4, 5]_{16}$.
	\end{example}

	\begin{example}
		Let $q=5$ and $\mathbb{F}_{25}=\mathbb{F}_{5}(\zeta)$ with $\zeta^{2}-\zeta+2=0$.
		Consider the hyperelliptic curve
		\[
		\mathcal{X}:y^{2}=x^{5}+x
		\]
		over $\mathbb{F}_{25}$, of genus $g=2$.
		
		We select the Mumford pairs
		\[
		(u_{A}(x),\, v_{A}(x)) = (x^{2}+2,\, 0)
		\]
		and $(u_B(x), v_B(x))$ given by
		\begin{align*}
			u_{B}(x) &= x^{5}+(3\zeta+3)x^{4}  + (2\zeta+2)x^{3} +(2\zeta+1)x^{2} +  (2\zeta+1)x+4,  \\
			v_{B}(x) &= 3\zeta x^{3}+(2\zeta+4)x^{2}+(\zeta+4)x+3\zeta.
		\end{align*}
		
		Let $G$ be the divisor obtained from $(u_{A},v_{A})$ and $(u_{B},v_{B})$
		as in Theorem~\ref{thm:LCP-hyperelliptic-Goppa}. Specifically, its corresponding polynomial data is
		\begin{align*}
			u_{G}(x) &= x^{7}+(3\zeta+3)x^{6}+(2\zeta+4)x^{5}+(3\zeta+2)x^{4}+\zeta x^{3}+(4\zeta+1)x^{2}+(4\zeta+2)x+3, \\
			v_G(x) &= (4\zeta+4)x^{6}+x^{5}+(2\zeta+2)x^{4}+3x^{3}+4x^{2}+2x+4\zeta+2.
		\end{align*}
		
		Setting $k=1$, the pairs $(u_{A},v_{A})$ and $(u_{B},v_{B})$ satisfy the conditions of Proposition~\ref{prop:lcd-y^2=x^q+x}. Consequently, there exists a vector $\boldsymbol{\mu}\in\left(\mathbb{F}_{25}^{*}\right)^{10}$ such that $\boldsymbol{\mu}\cdot\mathcal{C}_{\mathcal{L}}(\mathcal{D},G)$ is an LCD code, where $\mathcal{D}$ denotes the sum of the $10$ rational places whose $x$-coordinates are roots of $x^5+x-1$. An algorithmic verification of the associated $\binom{10}{5} = 252$ conditions confirms that all reduced Mumford representatives possess a $u$-polynomial of degree $g = 2$. Hence, the resulting LCD code is MDS with parameters $[10, 5, 6]_{25}$.
	\end{example}

	\begin{example}
		Let $q=7$ and $\mathbb{F}_{49}=\mathbb{F}_{7}(\zeta)$ with $\zeta^{2}-\zeta+3=0$.
		Define the hyperelliptic curve
		\[
		\mathcal{X}:y^{2}=x^{7}+x
		\]
		over $\mathbb{F}_{49}$, of genus $g=3$.
		
		We choose the Mumford pairs
		\[
		(u_{A}(x), \, v_{A}(x)) = (x^{3}+x, \, 0 )
		\]
		and $(u_{B}(x), v_{B}(x))$, where
		\begin{align*}
			u_{B}(x) &= x^{14} +(6\zeta+5)x^{13}+(2\zeta+1)x^{12}+(3\zeta+6)x^{11}+(5\zeta+6)x^{10} \\
			&\quad +(2\zeta+4)x^{9}+(5\zeta+4)x^{8}+(3\zeta+2)x^{7}+(6\zeta+2)x^{6} \\
			&\quad +(4\zeta+2)x^{5}+(\zeta+4)x^{4}+(6\zeta+6)x^{3}+6\zeta x^{2} +(6\zeta+4)x+4\zeta+5, \\
			v_{B}(x) &= (4\zeta+2)x^{13}+(3\zeta+2)x^{12}+(2\zeta+5)x^{11}+(\zeta+1)x^{10} +(4\zeta+2)x^{9}  \\
			&\quad +(4\zeta+6)x^{8} +(4\zeta+4)x^{7}+(4\zeta+2)x^{6}+2\zeta x^{5}+(6\zeta+4)x^{4} \\  
			&\quad +(2\zeta+3)x^{3}+6\zeta x^{2}+(2\zeta+3)x+6\zeta+6.
		\end{align*}
		Let $G$ be the divisor obtained from $(u_{A},v_{A})$ and $(u_{B},v_{B})$
		as in Theorem~\ref{thm:LCP-hyperelliptic-Goppa}.
		
		Setting $k=2$, the pairs $(u_{A},v_{A})$ and $(u_{B},v_{B})$ satisfy the conditions of Proposition~\ref{prop:lcd-y^2=x^q+x}. Therefore, there exists a vector $\boldsymbol{\mu}\in\left(\mathbb{F}_{49}^{*}\right)^{28}$ such that $\boldsymbol{\mu}\cdot\mathcal{C}_{\mathcal{L}}(\mathcal{D},G)$ is an LCD code with parameters $[28,14]_{49}$, where $\mathcal{D}$ denotes the sum of the $28$ rational places whose $x$-coordinates are roots of $(x^7 + x)^2 - 1$.
	\end{example}

	\section*{Acknowledgements}
	Saeed Tafazolian  was partially supported by CNPq grant no.~302774/2025-4  and FAPESP grant no.~2024/00923-6.
	Yuri da Silva was supported by FAPESP grant no.~2026/09571-0.


\begin{thebibliography}{99}
		
		\bibitem{Ballet2006}
		S.~Ballet and D.~Le~Brigand,
		``On the existence of non-special divisors of degree $g$ and $g-1$ in
		algebraic function fields over $\mathbb{F}_q$,''
		\emph{Journal of Number Theory}, vol.~116, no.~2, pp.~293--310, 2006.
		
		\bibitem{BK2025}
		S.~Ballet and M.~Koutchoukali,
		``On the non-special divisors in algebraic function fields defined over
		finite fields,''
		\emph{Polynesian Journal of Mathematics}, vol.~2, no.~1, pp.~1--41, 2025.
		
		\bibitem{Beelen2018}
		P.~Beelen and L.~Jin, Explicit MDS codes with complementary duals.
		\emph{IEEE Trans. Inform. Theory} 64 (2018), 7188--7193.
		
		\bibitem{BeelenTRS2017}
		P.~Beelen, S.~Puchinger, and J.~Rosenkild\'{e},
		``Twisted Reed--Solomon codes,''
		in \emph{Proc. IEEE Int. Symp. Inf. Theory (ISIT)}, 2017, pp.~334--336.
		
		\bibitem{BeelenStructural2018}
		P.~Beelen, M.~Bossert, S.~Puchinger, and J.~Rosenkild\'{e},
		``Structural properties of twisted Reed--Solomon codes with applications to
		code-based cryptography,''
		in \emph{Proc. IEEE Int. Symp. Inf. Theory (ISIT)}, 2018, pp.~946--950.
		
		\bibitem{Bhowmick2024}
		S.~Bhowmick, D.~K.~Dalai, and S.~Mesnager,
		``On linear complementary pairs of algebraic geometry codes over finite
		fields,''
		\emph{Discrete Math.}, vol.~347, no.~12, p.~114193, 2024.
		
		\bibitem{magma}
		W. Bosma, J. Cannon, and C. Playoust, 
		\emph{The Magma algebra system. I. The user language}, 
		J. Symbolic Comput. \textbf{24} (1997), no. 3-4, 235--265. 
		Computational algebra and number theory (London, 1993).
		
		\bibitem{Bringer14}
		J.~Bringer, C.~Carlet, H.~Chabanne, S.~Guilley, and H.~Maghrebi,
		Orthogonal direct sum masking: A smartcard friendly computation paradigm
		in a code, with builtin protection against side-channel and fault attacks,
		in \emph{Proc. IFIP Int. Workshop Inf. Secur. Theory Pract.},
		Springer, 2014, pp.~40--56.
		
		\bibitem{Cantor1987}
		D.~G.~Cantor,
		Computing in the Jacobian of a hyperelliptic curve.
		\emph{Mathematics of Computation}, 48(177):95--101, 1987.
		
		\bibitem{Carlet2018}
		C.~Carlet, C.~G\"{u}neri, F.~\"{O}zbudak, B.~\"{O}zkaya, and P.~Sol\'{e},
		``On linear complementary pairs of codes,''
		\emph{IEEE Trans. Inf. Theory}, vol.~64, no.~10, pp.~6583--6589, 2018.
		
		\bibitem{CarletGuilley2016}
		C.~Carlet and S.~Guilley,
		``Complementary dual codes for counter-measures to side-channel attacks,''
		\emph{J. Adv. Math. Commun.}, vol.~10, no.~1, pp.~131--150, 2016.
		
		\bibitem{CarletEquiv2018}
		C.~Carlet, S.~Mesnager, C.~Tang, Y.~Qi, and R.~Pellikaan,
		``Linear codes over $\mathbb{F}_q$ are equivalent to LCD codes for $q > 3$,''
		\emph{IEEE Trans. Inf. Theory}, vol.~64, no.~4, pp.~3010--3017, 2018.
		
		\bibitem{CarletMDS2018}
		C.~Carlet, S.~Mesnager, C.~Tang, and Y.~Qi,
		``Euclidean and Hermitian LCD MDS codes,''
		\emph{Des. Codes Cryptogr.}, vol.~86, no.~11, pp.~2605--2618, 2018.
		
		\bibitem{Castellanos2025}
		A.~S.~Castellanos, A.~V.~Marques, and L.~Quoos,
		``Linear complementary dual codes and linear complementary pairs of AG
		codes in function fields,''
		\emph{IEEE Trans. Inf. Theory}, vol.~71, no.~3, pp.~1676--1688, 2025.
		
		\bibitem{ChenLiu2018}
		B.~Chen and H.~Liu,
		``New constructions of MDS codes with complementary duals,''
		\emph{IEEE Trans. Inf. Theory}, vol.~64, no.~8, pp.~5776--5782, 2018.
		
		\bibitem{MBoerThesis}
		M.~A.~de~Boer,
		\emph{Codes: Their Parameters and Geometry},
		Ph.D.\ thesis, Technische Universiteit Eindhoven,
		Mathematics and Computer Science, 1997.
		
		\bibitem{Goppa1981}
		V.~D.~Goppa,
		Codes on algebraic curves.
		In \emph{Sov. Math.-Dokl}, volume~24, pages 170--172, 1981.
		
		\bibitem{4author}
		J.~Huang, H.~Chen, H.~Zhang, and C.~A.~Zhao,
		``Linear complementary pairs of algebraic geometry codes via Kummer
		extensions,''
		\emph{arXiv preprint arXiv:2506.23081}, 2025.
		
		\bibitem{Jin2017}
		L.~Jin,
		``Construction of MDS codes with complementary duals,''
		\emph{IEEE Trans. Inf. Theory}, vol.~63, no.~5, pp.~2843--2847, 2017.
		
		
		\bibitem{LiuLiu2021}
		H.~Liu and S.~Liu,
		``Construction of MDS twisted Reed--Solomon codes and LCD MDS codes,''
		\emph{Des. Codes Cryptogr.}, vol.~89, pp.~2051--2065, 2021.
		
		\bibitem{MQ2025}
		A. Marques and L. Quoos, \emph{Sequences of LCD AG codes and LCP of AG Codes attaining the Tsfasman-Vladut-Zink bound}, arXiv preprint arXiv:2505.23937 [math.AG], 2025.
		
		\bibitem{MST2026}
		A.~Marques, Y.~da~Silva, and S.~Tafazolian,
		Construction of Non-Special Divisors on Kummer Covers with Arbitrary
		Ramification for LCP Codes,
		\emph{arXiv preprint arXiv:2605.14046}, 2026.
		
		\bibitem{Massey1992}
		J.~L.~Massey,
		``Linear codes with complementary duals,''
		\emph{Discrete Math.}, vol.~106--107, pp.~337--342, 1992.
		
		\bibitem{Mendoza2026}
		E.~Mendoza, H.~Navarro, and L.~Quoos,
		``Characterization of non-special divisors of small degree on Kummer
		extensions and LCP codes,''
		\emph{arXiv preprint arXiv:2604.27146}, 2026.
		
		\bibitem{Moreno2024}
		E.~C.~Moreno, H.~H.~Lopez, and G.~L.~Matthews,
		``Explicit non-special divisors of small degree, algebraic geometric
		hulls, and LCD codes from Kummer extensions,''
		\emph{SIAM J. Appl. Algebra Geometry}, vol.~8, no.~2, pp.~394--413, 2024.
		
		\bibitem{Mum84}
		D.~Mumford,
		\emph{Tata Lectures on Theta {II}: Jacobian Theta Functions and
			Differential Equations},
		vol.~43, Birkh\"{a}user Boston, 1984.
		
		\bibitem{QianZhang2018}
		J.~Qian and L.~Zhang,
		``On MDS linear complementary dual codes and entanglement-assisted quantum
		codes,''
		\emph{Des. Codes Cryptogr.}, vol.~86, no.~7, pp.~1565--1572, 2018.
		
		\bibitem{Sheekey2016}
		J.~Sheekey,
		``A new family of linear maximum rank distance codes,''
		\emph{Adv. Math. Commun.}, vol.~10, no.~3, pp.~475--488, 2016.
		
		\bibitem{ShiYueYang2018}
		X.~Shi, Q.~Yue, and S.~Yang,
		``New LCD MDS codes constructed from generalized Reed--Solomon codes,''
		\emph{J. Algorithm Appl.}, vol.~1, p.~1950150, 2018.
		
		\bibitem{Stichtenoth2009}
		H.~Stichtenoth,
		\emph{Algebraic Function Fields and Codes},
		Graduate Texts in Mathematics, vol.~254, 2nd~ed.,
		Springer-Verlag, Berlin, 2009.
		
		\bibitem{TVZ1982}
		M.~A. Tsfasman, S.~G. Vl\u{a}du\c{t}, and T.~Zink.
		\newblock Modular curves, {S}himura curves, and {G}oppa codes, better than {V}arshamov-{G}ilbert bound.
		\newblock {\em Mathematische Nachrichten}, 109(1):21--28, 1982.
	\end{thebibliography}
\end{document}